\theoremstyle{plain}
\newtheorem{theorem}{Theorem}
\newtheorem{lemma}[theorem]{Lemma}
\newtheorem{proposition}[theorem]{Proposition}
\theoremstyle{definition}
\newtheorem{definition}[theorem]{Definition}
\newtheorem{remark}[theorem]{Remark}
\numberwithin{equation}{section}
\numberwithin{theorem}{section}
\author{Yuxin Li\footnote{liyx097@nenu.edu.cn}} \affil{School of Mathematics and Statistics,
	Northeast Normal University, Changchun 130024, Jilin,
	PR China}
\author{Xiaojun Chang \footnote{changxj100@nenu.edu.cn}} \affil{School of Mathematics and Statistics \& Center for Mathematics and Interdisciplinary Sciences,
 Northeast Normal University, Changchun 130024, Jilin,
PR China}
\author{Zhaosheng Feng \footnote{zhaosheng.feng@utrgv.edu}} \affil{School of Mathematical and Statistical Sciences, University of Texas Rio Grande Valley, Edinburg, TX 78539, USA}
\title{Normalized solutions for Sobolev critical Schr\"odinger-Bopp-Podolsky systems}
\date{}
\begin{document}

\maketitle

\begin{abstract}
	We study the Sobolev critical Schr\"odinger-Bopp-Podolsky system
\begin{gather*}
 -\Delta u+\phi u=\lambda u+\mu|u|^{p-2}u+|u|^4u\quad \text{in }\mathbb{R}^3,\\
 -\Delta\phi+\Delta^2\phi=4\pi u^2\quad \text{in } \mathbb{R}^3,
\end{gather*}
under the mass constraint
\[
 \int_{\mathbb{R}^3}u^2\,dx=c
\]
for some prescribed $c>0$, where $2<p<8/3$, $\mu>0$ is a parameter,
and $\lambda\in\mathbb{R}$ is a Lagrange multiplier. By developing a constraint
minimizing approach, we show that the above system admits a local minimizer.
Furthermore, we establish the existence of normalized ground state solutions.
\end{abstract}

\medskip

{\noindent Keywords:} Normalized Solutions; Schr\"odinger-Bopp-Podolsky System;
Ground States; Variational Methods.\\

\section{Introduction}\label{intro}\setcounter{equation}{0}

We consider the Schr\"odinger-Bopp-Podolsky system
\begin{equation}\label{SBP}
\begin{gathered}
 -\Delta u+\phi u=\lambda u+\mu|u|^{p-2}u+|u|^4u
 \quad \text{in }\mathbb{R}^3,\\
 -\Delta\phi+\Delta^2\phi=4\pi u^2\quad \text{in }\mathbb{R}^3,
\end{gathered}
\end{equation}
where $u,\phi:\mathbb{R}^3\to\mathbb{R}$, $\mu>0$, $\lambda\in\mathbb{R}$ and
$2<p<8/3$. System \eqref{SBP} was suggested as a model to describe solitary
waves for nonlinear Schr\"odinger equation coupled with an electromagnetic
field in the Bopp-Podolsky electromagnetic theory \cite{Bopp1940,Podolsky1942}.
The functions $u$ and $\phi$ denote the modulus of the wave function and the
electrostatic potential, respectively. The Bopp-Podolsky theory is a
second-order gauge theory of the electromagnetic field, which was developed
by Bopp \cite{Bopp1940} and Podolsky \cite{Podolsky1942} independently to
solve the alleged infinity problem in classical Maxwell theory.
For more physical applications, we refer the reader to
\cite{BPVZ2017,Born-1934,BI-1934, BPS2014, BPS2017, CDMPP2018, Feng2022, KT1948}
and the references therein.

In the recent decades, considerable attention has been given to the
Schr\"odinger-Bopp-Podolsky system  from quite a few scientific fields.
Siciliano-D'Avenia \cite{DS2019} studied a Schr\"odinger-Bopp-Podolsky system
with Sobolev subcritical growth,
\begin{equation}\label{DS2019}
 \begin{gathered}
 -\Delta u+\omega u +q^2\phi u=|u|^{p-2}u\quad\text{in }\mathbb{R}^3,\\
 -\Delta\phi+a^2\Delta^2\phi=4\pi u^2\quad\text{in }\mathbb{R}^3,
 \end{gathered}
\end{equation}
where $a>0$, $\omega>0$, $q\neq 0$, and $p\in(2,3]$.
They obtained the existence and nonexistence results depending on the
various ranges of $p$ and $q$ and showed that, in the radial case,
those solutions tend to the solutions of the classical
Schr\"odinger-Poisson equation as $a\to0$.
Silva-Siciliano \cite{SS2020} proved that the system has no solutions for
large $q$ and has two radial solutions for small $q$.
They also presented qualitative properties about the energy level of the
solutions and a variational characterization of these extremal values of $q$.
Wang-Chen-Liu \cite{WCL2022} established the existence, multiplicity and
asymptotic behavior of solutions for the Schr\"odinger-Bopp-Podolsky system
with general nonlinearities.
Figueiredo-Siciliano \cite{FS2023} proved the existence and multiplicity
of solutions for the Schr\"odinger-Bopp-Podolsky
system under positive potentials.

Chen-Tang \cite{CTcritical2020} studied a critical Schr\"odinger-Bopp-Podolsky
system with a subcritical perturbation,
\begin{equation}\label{CT2020}
 \begin{gathered}
 -\Delta u+V(x)u+\phi u=\mu f(u)+u^5\quad\text{in }\mathbb{R}^3,\\
 -\Delta\phi+a^2\Delta^2\phi=4\pi u^2\quad\text{in }\mathbb{R}^3,
 \end{gathered}
\end{equation}
where $a>0$, $V$, and $f$ are continuous functions, and
$\int_0^tf(s)\,ds\ge t^p$ for $p\in(2,6)$ and $t\ge0$.
They showed that system \eqref{CT2020} admits ground state solutions under
certain conditions of $V$ and $f$. Using different variational techniques,
Li-Pucci-Tang \cite{LPT2020} obtained the existence of a nontrivial ground
state solution for \eqref{CT2020} when $f(u)=|u|^{p-1}u$ and its limit
system in the sense that $V(x)\to V_{\infty}\in \mathbb{R}^+$ as
$|x|\to +\infty$. Subsequently, Hu-Wu-Tang \cite{HWT2023} established the
existence of least energy sign-changing solutions of \eqref{CT2020}.
For more recent results, we refer to
\cite{Bor2023,lin2022,MS2023,PJ2023,Qu2022,Zh2022,ZCC2021}.

Note that the papers mentioned above on system \eqref{DS2019} assume
$\omega\in\mathbb{R}$ as a fixed parameter to study nontrivial solutions.
Alternatively, we can search for solutions with the prescribed $L^2$-norm
for system \eqref{SBP}. This approach seems to be meaningful from the
physical point of view because of the conservation of mass.
In the present study, we focus on finding normalized solutions to \eqref{SBP},
i.e.\ a couple $(u,\lambda)\in H^1(\mathbb{R}^3)\times\mathbb{R}$ satisfies
\eqref{SBP} together with the normalization condition
\begin{equation}\label{massconstraint}
 \int_{\mathbb{R}^3}|u|^2\,dx=c
\end{equation}
for a priori given $c>0$. As we know, for each $u\in H^1(\mathbb{R}^3)$,
there exists a unique solution $\phi=\phi_u$ to the second equation of \eqref{SBP}
satisfying
\[
 \phi_u(x)=\frac{1}{4\pi}\int_{\mathbb{R}^3}\frac{1-e^{-|x-y|}}{|x-y|}u^2(y)\,dy.
\]
Then, system \eqref{SBP} is reduced into an equivalent integro-differential form
\begin{equation}\label{sbp}
 -\Delta u+\phi_u u=\lambda u+\mu|u|^{p-2}u+|u|^4u\quad \text{in }\mathbb{R}^3.
\end{equation}
It is standard that for any $c>0$, a solution of \eqref{sbp}-\eqref{massconstraint}
can be regarded as a critical point of the corresponding Energy functional
\[
 I(u):=\frac{1}{2}\int_{\mathbb{R}^3}|\nabla u|^2\,dx
+\frac{1}{4}\int_{\mathbb{R}^3}\phi_uu^2\,dx
-\frac{\mu}{p}\int_{\mathbb{R}^3}|u|^p\,dx-\frac16\int_{\mathbb{R}^3}|u|^6\,dx,
\]
restricted to
\[
 S(c):=\big\{u\in H^1(\mathbb{R}^3): \int_{\mathbb{R}^3}u^2\,dx=c\big\}.
\]
Then the parameter $\lambda\in\mathbb{R}$ appears as a Lagrange multiplier.
It is easy to verify that $I(u)$ is a well-defined and $C^1$ functional on
$H^1(\mathbb{R}^3)$.  Recently, there are numerous contributions flourishing
within this topic, for instance, see
\cite{BM2021,bartsch2016, bartsch2019, bartsch2021, bhigou,  CJS2023, CJ2019,
Jean1997,JJLV2022,jeanjeanttl,soavenls,S2020-2,weiwu}.

\begin{definition} \rm
 We say that $\tilde{u}\in S(c)$ is a ground state solution of \eqref{sbp}
if it is a solution having minimal energy among all the solutions which belong
to $S(c)$, i.e.,
 \[
 dI|_{S(c)}(\tilde{u})=0, \quad
 I(\tilde{u})=\inf\{I(u): dI|_{S(c)}(u)=0,\ u\in S(c)\}.
 \]
\end{definition}

Afonso-Siciliano \cite{AS2021} considered the existence of normalized solutions
for the Schr\"odinger-Bopp-Podolsky system in bounded domains under Neumann
boundary conditions. He-Li-Chen \cite{HLC2022} investigated the following system,
\begin{equation}\label{HLC2022}
 \begin{gathered}
 -\Delta u+\omega u+\phi u=|u|^{p-2}u\quad\text{in }\mathbb{R}^3,\\
 -\Delta\phi+a^2\Delta^2\phi=4\pi u^2\quad\text{in }\mathbb{R}^3,\\
 \|u\|_{L^2}^2=\rho,
 \end{gathered}
\end{equation}
where $\omega\in\mathbb{R}$, $a>0$, and $\rho>0$. By the minimizing method, they obtained
the existence of normalized solutions for $\omega>0$, $a=1$, and
$p\in(2,\frac{10}{3})$, in which the corresponding functional is bounded from
below on $S(c)$. Ramos-Siciliano \cite{RS2023} proved that if $2<p<3$, $\rho>0$
is sufficiently small or if $3<p<\frac{10}{3}$, $\rho>0$ is sufficiently large,
then system \eqref{HLC2022} admits a least energy solution.
Moreover, in the case of $2<p<\frac{14}{5}$ and $\rho>0$ small enough, the least
energy solutions are radially symmetric up to translation and converge to a least
energy solution of the Schr\"odinger-Poisson-Slater system under the same $L^2$-norm
constraint.

We remark that the above papers do not involve the $L^2$-supcritical case where
$p>\frac{10}{3}$. Since in such a situation, the classical methods for dealing
with $L^2$-supercritical problems fail due to the fact that $\phi_u$ is not
homogeneous, which is difficult for us to make use of the scaling of type
$t\mapsto t^{\alpha}u(t^{\beta}\cdot)$ for $\alpha, \beta\in\mathbb{R}$ and $t>0$.
Moreover, the appearance of the term
$\int_{\mathbb{R}^3}\int_{\mathbb{R}^3}e^{-|x-y|}u^2(x)u^2(y)\,dx\,dy$ in the
corresponding Pohozaev identity is another obstacle to deal with.
It is worthy to note that in the case of $a=0$, this problem reduces to the
well-known Schr\"odinger-Poisson-Slater system
 \begin{equation}\label{SPsystem}
 \begin{gathered}
  -\Delta u+b_1(|x|^{-1}\ast|u|^2)u+\lambda u=b_2|u|^{p-2}u
  \quad\text{in }\mathbb{R}^3,\\
  \|u\|_{L^2}^2=c,
 \end{gathered}
 \end{equation}
where $b_1$, $b_2\in \mathbb{R}$, and $p\in(\frac{10}{3},6]$.
Bellazzini-Jeanjean-Luo \cite{jean2013} proved that if $b_1, b_2>0$,
then \eqref{SPsystem} admits a solution of mountain pass type for $c>0$ sufficiently
small by using a mountain-pass argument. Recently, this result has been developed
by Jeanjean-Le \cite{jeanjeanttl} under different assumptions on $b_1$, $b_2$, and $p$.
Li-Zhang \cite{LZ2023} investigated system \eqref{SBP} with a Sobolev critical term.
For $p\in(\frac{10}{3},6)$, by applying a mountain-pass argument, they established
the existence of positive normalized ground state solutions to \eqref{SBP} for
large $\mu>0$ and small $c>0$. For $p\in(2,\frac{10}{3}]$, by combining the mountain
pass theorem with Lebesgue dominated convergence theorem, they proved the existence
of normalized ground state solutions for large $\mu>0$ and small $c>0$.

Because of the critical term $|u|^4u$, it is not difficult to check that
$I|_{S(c)}$ is unbounded from below. However, as we see, the combined action of
$L^2$ subcritical term $\mu|u|^{p-2}u$ and the nonlocal term $\phi_uu$ creates
a geometry of local minima of $I$ on $S(c)$ for $c>0$ small enough.
Based on \cite{JJLV2022,S2020-2}, there is a natural question whether $I|_{S(c)}$
has a critical point which is a local minimizer in the case where
$p\in(2,\frac{8}{3})$. This constitutes the main motivation of this study and our
goal is to make an effort to find a positive answer to this question.

For $u\in S(c)$, we set
\[
 u^t(x):=t^{3/2}u(tx),\quad t>0,\; x\in\mathbb{R}^3.
\]
A direct calculation leads to
\begin{equation}\label{fibermap}
\begin{aligned}
 \Phi_u(t):=I(u^t)&=\frac{t^2}{2}\int_{\mathbb{R}^3}|\nabla u|^2\,dx
  +\frac{t}{16\pi}\int_{\mathbb{R}^3}\int_{\mathbb{R}^3}
  \frac{1-e^{-\frac1{t}|x-y|}}{|x-y|}u^2(x)u^2(y)\,dx\,dy\\
&\quad-\frac{\mu\, t^{\frac{3(p-2)}{2}}}{p}\int_{\mathbb{R}^3}|u|^p\,dx
 -\frac{t^{6}}{6}\int_{\mathbb{R}^3}|u|^6\,dx,
\end{aligned}
\end{equation}
which is the so-called fiber map and plays an important role in the discussion
of the geometrical  structure of the functional $I$. At this stage,
we introduce the related Pohozaev manifold defined by
\[
 \Lambda(c):=\{u\in S(c): Q(u)=0\},
\]
where
\begin{equation}\label{pohozaevtypeid}
\begin{aligned}
 Q(u)&:=\frac{d}{dt}\big|_{t=1}I(u^t) \\
 &=\int_{\mathbb{R}^3}|\nabla u|^2\,dx
  +\frac{1}{16\pi}\int_{\mathbb{R}^3}\int_{\mathbb{R}^3}
   \frac{1-e^{-|x-y|}}{|x-y|}u^2(x)u^2(y)\,dx\,dy\\
 &\quad -\frac{1}{16\pi}\int_{\mathbb{R}^3}\int_{\mathbb{R}^3}
  e^{-|x-y|}u^2(x)u^2(y)\,dx\,dy
  -\frac{3\mu\,(p-2)}{2p}\int_{\mathbb{R}^3}|u|^p\,dx\\
&\quad -\int_{\mathbb{R}^3}|u|^6\,dx.
\end{aligned}
\end{equation}
As mentioned earlier, for any fixed $\mu>0$, we can find a $c_0=c_0(\mu)>0$
such that, for any $c\in(0,c_0)$ there exists an open set $V(c)\subset S(c)$
with the property
\[
 m(c):=\inf_{u\in V(c)} I(u)<0<\inf_{u\in\partial V(c)}I(u),
\]
where
\[
 V(c):=\{u\in S(c):\|\nabla u\|_2^2<\rho_0\},~~~\partial V(c):=\{u\in S(c):\|\nabla u\|_2^2=\rho_0\}
\]
for a suitable $\rho_0>0$ only depending on $c_0>0$.

In the process of minimization, the key difficulty is the lack of compactness of
the bounded minimizing sequence $\{u_n\}\subset V(c)$ and the most critical step
is to prove the strong subadditivity inequality
\begin{equation}\label{strict add}
 m(c)<m(c_1)+m(c_2)~~\text{for all}~~0<c_1,c_2<c,
\end{equation}
which is a sufficient condition to ensure that any minimizing sequence on $V(c)$
is relatively compact. Moreover, \eqref{strict add}
is a stronger version of the so-called  weak subadditivity inequality
\begin{equation}\label{weak add}
 m(c)\leq m(c_1)+m(c_2)\quad \text{for all }0<c_1,c_2<c.
\end{equation}
However, because of $p\in(2,\frac{8}{3})$ and the existence of the nonlocal term
$\int_{\mathbb{R}^3}\phi_uu^2\,dx$, the method introduced in
\cite[Lemma 2.6]{JJLV2022} becomes invalid.
In fact, if we do the scaling $v=\sqrt{\theta}u$, it is impossible to obtain that
\[
 m(\theta\alpha)\leq \theta m(\alpha),\quad \theta>1, \; \alpha>0.
\]

Following \cite{Bellazziniscal}, we introduce the condition
\begin{equation}
\text{the function $c\to\frac{m(c)}{c}$ is strictly decreasing}. \label{eMD}
\end{equation}
From this assumption it follows
\[
 \frac{c_1}{c}m(c)<m(c_1),\quad \frac{c-c_1}{c}<m(c-c_1).
\]
That is,
\[
 m(c)=\frac{c_1}{c}m(c)+\frac{c-c_1}{c}m(c)<m(c_1)+m(c-c_1),
\]
whenever $0<c_1<c$. However, verifying condition
\eqref{eMD} is not easy since the function $c\to\frac{m(c)}{c}$ has oscillatory behavior,
even in a neighborhood of the origin. To overcome this difficulty, we adapt
the techniques developed by Bellazzini-Siciliano \cite{Bellazziniscal,Bellazzini2011}
as to finding sufficient conditions to avoid dichotomy.

As we see, the presence of the term $\int_{\mathbb{R}^3}\phi_uu^2\,dx$ has a
significant impact on the geometry of $\Phi_u(t)$ and on the existence of ground
state solutions. The existence of a normalized ground state solution for the
nonlinear Schr\"odinger equation with a Sobolev critical term and a $L^2$-subcritical
perturbation was discussed in \cite{JJLV2022}, where the local minima of the
constraint functional is exactly a ground state.
However, it is not trivial for our case due to the complex structure of the fiber
map. For this situation, we turn our attention to the Pohozaev manifold
$\Lambda(c)$ which contains the solutions of \eqref{SBP},
see Lemma \ref{npidentity}, to search for a ground state by taking the minimization
in the set of solutions.
Our main result reads as follows.

\begin{theorem}\label{thm: main result}
Let $p\in(2,8/3)$. For any $\mu>0$ there exists a $c_0=c_0(\mu)>0$
such that, for any $c\in(0,c_0)$, $I(u)$ restricted to $S(c)$ has a critical point
$u_c$ at a negative level $m(c)<0$ which is an interior local minimizer of $I(u)$
in the set $V(c)$. Moreover, system \eqref{SBP} admits a ground state solution on
$S(c)$.
\end{theorem}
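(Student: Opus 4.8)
The plan is to prove the theorem in two stages: first I will produce the interior local minimizer $u_c$ of $I$ on $V(c)$ at the level $m(c)<0$ by a constrained concentration-compactness argument, and then I will promote $u_c$ to a normalized ground state by comparing its energy with that of an arbitrary solution through the fiber map and the Pohozaev manifold $\Lambda(c)$.

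For the first stage, I start from the geometry already recorded, namely that for $c\in(0,c_0)$ one has $m(c)<0<\inf_{\partial V(c)}I$, so any minimizer over $\overline{V(c)}$ must be interior. Fix a minimizing sequence $\{u_n\}\subset V(c)$ with $I(u_n)\to m(c)$; it is bounded in $H^1(\mathbb{R}^3)$ because $\|\nabla u_n\|_2^2<\rho_0$ and $\|u_n\|_2^2=c$. The heart of the matter is to recover compactness, which I would do via Lions' concentration-compactness principle applied to the mass densities $u_n^2$. Vanishing is ruled out since it forces $\|u_n\|_p\to 0$, after which the smallness of the gradient in $V(c)$ (which, for $\rho_0$ small, dominates the critical term $\tfrac16\|u_n\|_6^6$ via the Sobolev inequality, while $\tfrac14\int\phi_{u_n}u_n^2\ge0$) yields $\liminf I(u_n)\ge0$, contradicting $m(c)<0$. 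Dichotomy is ruled out precisely by the strict subadditivity \eqref{strict add}, since a splitting into masses $c_1$ and $c-c_1$ would give $m(c)\ge m(c_1)+m(c-c_1)$. Hence, up to translations, $u_n\to u_c$ strongly in $H^1(\mathbb{R}^3)$ with $u_c\in\overline{V(c)}$ and $I(u_c)=m(c)$; the strict inequality $m(c)<\inf_{\partial V(c)}I$ forces $u_c\in V(c)$, so $u_c$ is an interior local minimizer and, by the Lagrange multiplier rule, a critical point of $I|_{S(c)}$ solving \eqref{sbp} for some $\lambda_c\in\mathbb{R}$.

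I expect the main obstacle to be establishing \eqref{strict add}, equivalently the monotonicity \eqref{eMD} that $c\mapsto m(c)/c$ is strictly decreasing; once \eqref{eMD} holds, \eqref{strict add} follows from the elementary computation recorded just after \eqref{eMD}. The usual homogeneity scaling $v=\sqrt{\theta}\,u$ is unavailable here because the nonlocal term $\int\phi_uu^2$ is not homogeneous and $p\in(2,8/3)$, so $m(\theta\alpha)\le\theta m(\alpha)$ cannot be invoked and $c\mapsto m(c)/c$ genuinely oscillates near the origin. To circumvent this I would follow the Bellazzini-Siciliano strategy: starting from a sequence realizing $m(c)$, perform a carefully calibrated rescaling that shifts the mass while tracking each term of $\Phi_u(t)$ in \eqref{fibermap} separately, and thereby extract a sufficient condition that forces the strict decrease of $m(c)/c$ on $(0,c_0)$.

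For the second stage, I use that by Lemma \ref{npidentity} every solution of \eqref{sbp}--\eqref{massconstraint} lies on $\Lambda(c)=\{u\in S(c):Q(u)=0\}$; in particular $u_c\in\Lambda(c)$, so the ground state level is at most $I(u_c)=m(c)$. To see that no solution beats this, fix any solution $v\in S(c)$ and examine $\Phi_v(t)=I(v^t)$. Let $t^*$ be the unique time with $v^{t^*}\in\partial V(c)$, so $v^t\in V(c)$ for $t<t^*$. Since $3(p-2)/2<1<2<6$, the smallest-exponent term dominates near the origin and $\Phi_v(t)\to 0^-$ as $t\to0^+$, whereas $\Phi_v(t^*)\ge\inf_{\partial V(c)}I>0$; hence $\Phi_v$ attains an interior local minimum at some $t_1\in(0,t^*)$ with $v^{t_1}\in V(c)$, so that $\Phi_v(t_1)\ge m(c)$. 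Because $v$ is a solution, $Q(v)=\Phi_v'(1)=0$, so $t=1$ is a critical point of $\Phi_v$; a sign analysis of $\Phi_v'$ governed by the exponent ordering shows that $\Phi_v$ has exactly two critical points, the minimum $t_1$ and a subsequent maximum, before descending to $-\infty$. In either case $I(v)=\Phi_v(1)\ge\Phi_v(t_1)\ge m(c)=I(u_c)$, which identifies $u_c$ as a normalized ground state and finishes the proof. The one delicate point here is the exact count of critical points of $\Phi_v$, which I would pin down by rewriting $\Phi_v'(t)=0$ and exploiting the monotonicity forced by the ordering of the four exponents.
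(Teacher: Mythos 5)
Your first stage is essentially the paper's argument: boundedness from $V(c)$, exclusion of vanishing via Lemma \ref{nonvanish pro1} and $m(c)<0$, exclusion of dichotomy via the strict subadditivity \eqref{strict add}, and the boundary estimate $m(c)<\inf_{\partial V(c)}I$ to keep the limit interior. Note, however, that you defer the entire technical core of that stage: the paper does not merely ``follow the Bellazzini--Siciliano strategy'' in the abstract, it verifies the hypothesis \eqref{md condi4} of Proposition \ref{strong subadd} by computing $h_{g_u}'(1)$ along the two-parameter family $u_\theta(x)=\theta^{(1+3\beta)/2}u(\theta^\beta x)$, combining the resulting identity with $Q(u)=0$ to derive \eqref{strong add condition eq3}--\eqref{strong add condition eq4}, and then running a three-case analysis on $p\in(2,12/5)$, $p=12/5$, $p\in(12/5,8/3)$ to reach a contradiction. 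This is where the restriction $p<8/3$ is actually used, and your proposal contains no substitute for it.

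The genuine gap is in your second stage. Your identification of $u_c$ as a ground state rests on the claim that for an arbitrary solution $v$, ``a sign analysis of $\Phi_v'$ governed by the exponent ordering shows that $\Phi_v$ has exactly two critical points.'' That is exactly the step that fails here. The fiber map \eqref{fibermap} contains the term
\begin{equation*}
\frac{t}{16\pi}\int_{\mathbb{R}^3}\int_{\mathbb{R}^3}\frac{1-e^{-\frac{1}{t}|x-y|}}{|x-y|}\,v^2(x)v^2(y)\,dx\,dy,
\end{equation*}
which is not a power of $t$; differentiating it produces the extra contribution $-\frac{1}{16\pi t}\iint e^{-|x-y|/t}v^2(x)v^2(y)\,dx\,dy$ (visible in $Q$, see \eqref{pohozaevtypeid}), and there is no fixed ``ordering of four exponents'' from which to count zeros of $\Phi_v'$. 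The paper flags precisely this obstruction in the introduction (``we cannot follow \cite{JJLV2022} directly to draw a conclusion that any ground state is contained in $V(c)$'') and takes a different route: by Lemma \ref{npidentity} every constrained critical point lies on $\Lambda(c)$, the restriction $I|_{\Lambda(c)}$ is coercive by Lemma \ref{npidentity coer}, so $\bar m(c)=\inf_{\Lambda(c)}I$ is well defined, and the Bellazzini--Siciliano compactness scheme is rerun for $\bar m(c)$ to produce a minimizer $\bar u$ over $\Lambda(c)$, which is the ground state. To repair your argument you would either have to supply a proof of the two-critical-point structure of $\Phi_v$ in the presence of the exponential term (which the authors evidently could not do), or switch to the paper's minimization over the Pohozaev manifold.
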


Before concluding this section, we would like to summarize new features in this
study.
\begin{itemize}
 \item The approach which we use for Theorem \ref{thm: main result} distinguishes
from those described in the literature, for example, see \cite[Theorem 1.2]{LZ2023}.
In fact, our arguments are based on the minimizing method instead of the mountain
pass theorem. Moreover, our arguments work for all $\mu>0$ and we do not need to
assume the range of the Lagrange multiplier $\lambda$ in the first step.
 \item To show that the minimizing sequences for $m(c)$ are relatively compact,
we make use of the strong subadditivity inequality \eqref{strict add}. We can not
just take the same steps as shown in \cite{JJLV2022} because of the presence of the
term $\int_{\mathbb{R}^3}\phi_uu\,dx$. Therefore, we turn to develop another method
to ensure the inequality \eqref{strict add} to be true. Namely, we take into account
how to guarantee the condition \eqref{eMD}.

 \item The exponential term in $\phi_u$ makes the fiber map $\Phi_u(t)$ more
complicated since it exists in the first and second derivatives. Thus we cannot
follow \cite{JJLV2022} directly to draw a conclusion that any ground state is
contained in $V(c)$. To show the existence of the ground state solution, we take
a series of solutions in $\Lambda(c)$ and obtain the bounded Palais-Smale sequences
which are the minimizers of $I(u)$ on $S(c)$.
\end{itemize}

This article is organized as follows.
In Section \ref{preliminary}, we present some preliminaries and lemmas that will
be used later.
In Section \ref{1}, we prove our main result by clarifying the local minima
structure and showing the convergence, up to a translation, of all minimizing
sequences for the functional $I(u)$ on $V(c)$.

\section{Preliminary results}\label{preliminary}

Throughout this paper,  for any $1\leq s <\infty$, we denote by $L^s(\mathbb{R}^3)$
the usual Lebesgue space with norm $\|u\|^s_s:=\int_{\mathbb{R}^3}|u|^s\,dx$.
 We use $C_0^{\infty}(\mathbb{R}^3)$ to denote the space of the functions
infinitely differentiable with compact support in $\mathbb{R}^3$.
We denote by $C_1, C_2,\dots$ the positive constants whose values possibly
vary from line to line. The open ball in $\mathbb{R}^3$ is denoted by $B(x,R)$
with the center at $x$ and the radius $R$.

We start with the Hardy-Littlewood-Sobolev inequality \cite{liebloss}:
\begin{equation}\label{hls}
 \big|\int_{\mathbb{R}^N}\int_{\mathbb{R}^N}\frac{f(x)g(y)}{|x-y|^{\lambda}}
\,dx\,dy\big|\leq C(N,\lambda,p,q)\|f\|_{p}\|g\|_{q},
\end{equation}
where $f\in L^p(\mathbb{R}^N)$, $g\in L^q(\mathbb{R}^N), p,q>1$,
$0<\lambda<N$, and $\frac{1}{p}+\frac{1}{q}+\frac{\lambda}{N}=2$.

The following Gagliardo-Nirenberg inequality can be found in \cite{1983wein}:
\begin{equation}\label{gn}
 \|u\|_p\leq K_{GN}^{1/p}\|\nabla u\|_2^{\tau}\|u\|_2^{1-\tau},
\end{equation}
where $N\ge3$, $p\in[2,\frac{2N}{N-2}]$, and $\tau=N(\frac{1}{2}-\frac{1}{p})$.

We recall the optimal Sobolev constant $\mathcal{S}>0$, see \cite{HB1983}, which
is
\[
 \mathcal{S}=\inf_{ u\in\mathcal{D}^{1,2}(\mathbb{R}^3),\; u\neq 0}
\frac{\|\nabla u\|_2^2}{\|u\|^2_6},
\]
where
\[
 \mathcal{D}^{1,2}(\mathbb{R}^3):=\{u\in L^6(\mathbb{R}^3): |\nabla u|\in L^2(\mathbb{R}^3)\}
\]
is the completion of $C_0^{\infty}(\mathbb{R}^3)$ with the norm
\[
 \|u\|_{\mathcal{D}^{1,2}(\mathbb{R}^3)}=\|\nabla u\|_2.
\]
The Hilbert space defined by
\[
 \mathcal{D}:=\{u\in\mathcal{D}^{1,2}(\mathbb{R}^3):\Delta u\in L^2(\mathbb{R}^3)\}
\]
is the completion of $C_0^{\infty}(\mathbb{R}^3)$ with respect to the norm
\[
 \|u\|_{\mathcal{D}}^2=\|\Delta u\|_2^2+\|\nabla u\|_2^2.
\]
It is easy to show that $\mathcal{D}$ is continuously embedded into
$\mathcal{D}^{1,2}(\mathbb{R}^3)$, see \cite{DS2019}.

\begin{lemma}\cite[Lemma 3.4]{DS2019}\label{nonlocal property}
 For every $u\in H^1(\mathbb{R}^3)$ we have
 \begin{enumerate}
 \item[(i)] for every $y\in\mathbb{R}^3$, $\phi_{u(\cdot+y)}=\phi_u(\cdot+y)$;
 \item[(ii)] $\phi_u\ge0$;
 \item[(iii)] $\phi_u\in\mathcal{D}$;
 \item[(iv)] $\|\phi_u\|_6\leq C\|u\|_{H^1(\mathbb{R}^3)}^2$; and
 \item[(v)] if $v_n\rightharpoonup v$ in $H^1(\mathbb{R}^3)$, then $\phi_{v_n}\rightharpoonup\phi_{v}$ in $\mathcal{D}$.
 \end{enumerate}
\end{lemma}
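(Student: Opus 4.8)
The plan is to work throughout from the explicit representation $\phi_u = G * (4\pi u^2)$, where $G(x) = \frac{1}{4\pi}\frac{1-e^{-|x|}}{|x|}$ is the fundamental solution of $-\Delta + \Delta^2$ on $\mathbb{R}^3$, supplemented by the variational (Lax--Milgram) description of $\phi_u$ as the unique element of $\mathcal{D}$ satisfying $a(\phi_u,\psi) := \int_{\mathbb{R}^3}\nabla\phi_u\cdot\nabla\psi\,dx + \int_{\mathbb{R}^3}\Delta\phi_u\,\Delta\psi\,dx = 4\pi\int_{\mathbb{R}^3}u^2\psi\,dx$ for all $\psi\in\mathcal{D}$. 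The bilinear form $a$ is precisely the inner product inducing $\|\cdot\|_{\mathcal{D}}$, hence coercive, and identifying this weak solution with the kernel formula is what ties the two viewpoints together.

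Parts (i) and (ii) are direct consequences of the kernel formula. For (i), replacing $u$ by $u(\cdot+y)$ and performing the change of variables $z\mapsto z-y$ in the convolution integral, the translation invariance of Lebesgue measure yields $\phi_{u(\cdot+y)}(x) = \phi_u(x+y)$. For (ii), since $1-e^{-t}\ge 0$ for all $t\ge0$, the kernel $\frac{1-e^{-|x-y|}}{|x-y|}$ is pointwise nonnegative, so integrating it against $u^2\ge0$ gives $\phi_u\ge0$.

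For (iii) and (iv) I would invoke Lax--Milgram. The linear functional $\psi\mapsto 4\pi\int_{\mathbb{R}^3}u^2\psi\,dx$ is bounded on $\mathcal{D}$: using the continuous embeddings $\mathcal{D}\hookrightarrow\mathcal{D}^{1,2}(\mathbb{R}^3)\hookrightarrow L^6(\mathbb{R}^3)$ together with H\"older's inequality with exponents $6/5$ and $6$, one obtains $|\int_{\mathbb{R}^3}u^2\psi\,dx|\le \|u\|_{12/5}^2\|\psi\|_6\le C\|u\|_{H^1(\mathbb{R}^3)}^2\|\psi\|_{\mathcal{D}}$, where the last inequality uses $H^1(\mathbb{R}^3)\hookrightarrow L^{12/5}(\mathbb{R}^3)$ (as $12/5\in[2,6]$). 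Coercivity of $a$ then produces a unique weak solution in $\mathcal{D}$, which coincides with the kernel formula, giving (iii). Testing the weak equation against $\phi_u$ itself gives $\|\phi_u\|_{\mathcal{D}}^2 = 4\pi\int_{\mathbb{R}^3}u^2\phi_u\,dx\le C\|u\|_{H^1(\mathbb{R}^3)}^2\|\phi_u\|_{\mathcal{D}}$, so $\|\phi_u\|_{\mathcal{D}}\le C\|u\|_{H^1(\mathbb{R}^3)}^2$; combined with $\|\phi_u\|_6\le C\|\phi_u\|_{\mathcal{D}}$ this establishes (iv).

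Part (v) is the substantive step. If $v_n\rightharpoonup v$ in $H^1(\mathbb{R}^3)$ then $\{v_n\}$ is bounded, so by (iv) $\{\phi_{v_n}\}$ is bounded in $\mathcal{D}$ and, along a subsequence, $\phi_{v_n}\rightharpoonup\Phi$ for some $\Phi\in\mathcal{D}$. To identify $\Phi=\phi_v$ I would pass to the limit in $a(\phi_{v_n},\psi) = 4\pi\int_{\mathbb{R}^3}v_n^2\psi\,dx$ for fixed $\psi\in C_0^\infty(\mathbb{R}^3)$: the left-hand side converges by weak convergence in $\mathcal{D}$, while for the right-hand side the Rellich--Kondrachov compact embedding gives $v_n\to v$ in $L^2$ on the compact support of $\psi$, whence $v_n^2\to v^2$ in $L^1$ there via $\|v_n^2-v^2\|_1\le\|v_n-v\|_2\|v_n+v\|_2$. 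Thus $a(\Phi,\psi)=4\pi\int_{\mathbb{R}^3}v^2\psi\,dx$ for every test function, and uniqueness forces $\Phi=\phi_v$; since every subsequential limit equals $\phi_v$, the full sequence converges weakly in $\mathcal{D}$. The main obstacle is exactly this identification: because the kernel is neither homogeneous nor a pure Riesz potential, one cannot simply rely on weak lower semicontinuity of a norm, and the argument genuinely hinges on the local compactness from Rellich--Kondrachov combined with the uniqueness supplied by Lax--Milgram.
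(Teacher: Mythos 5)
Your proof is correct. Note that the paper offers no proof of this lemma at all---it is quoted directly from D'Avenia--Siciliano \cite{DS2019} (their Lemma 3.4)---and your argument (translation invariance and pointwise nonnegativity of the kernel for (i)--(ii), Riesz/Lax--Milgram on $\mathcal{D}$ for (iii)--(iv), and boundedness in $\mathcal{D}$ plus Rellich--Kondrachov and uniqueness, with the standard subsequence argument, for (v)) is essentially the one given in that reference; the only step you assert rather than prove, namely that the variational solution coincides with the convolution of $4\pi u^2$ against $\frac{1}{4\pi}\frac{1-e^{-|x|}}{|x|}$, is likewise taken as known by the paper and is established in \cite{DS2019} before their Lemma 3.4.
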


\begin{lemma}\label{CDestimate}
 Let $u\in S(c)$. Then we have
 \begin{enumerate}
 \item[(i)] There exists a constant $K_H>0$ such that
 \[
 \int_{\mathbb{R}^3}\int_{\mathbb{R}^3}\frac{|u(x)|^2|u(y)|^2}{|x-y|}\,dx\,dy\leq K_H\|\nabla u\|_2\,c^{3/2}.
 \]
 \item[(ii)] There exists a constant $K_{GN}>0$ such that
 \[
 \|u\|_p^p\leq K_{GN}\|\nabla u\|_2^{\frac{3(p-2)}{2}}c^{\frac{6-p}{4}}.
 \]
 \end{enumerate}
\end{lemma}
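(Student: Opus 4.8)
My plan is to obtain both estimates as direct consequences of the Hardy--Littlewood--Sobolev inequality \eqref{hls} and the Gagliardo--Nirenberg inequality \eqref{gn}, using the mass constraint $\|u\|_2^2=c$ to convert every $L^2$-norm factor into a power of $c$. The whole argument is a matter of selecting the admissible exponents and keeping track of the interpolation weights.

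For part (i), I would first apply \eqref{hls} to the nonlocal integral with $N=3$, $\lambda=1$, and $f=g=u^2$. The balance condition $\frac1p+\frac1q+\frac{\lambda}{N}=2$ then forces $\frac1p+\frac1q=\frac53$; choosing $p=q=\frac65$ by symmetry gives
\[
 \int_{\mathbb{R}^3}\int_{\mathbb{R}^3}\frac{u^2(x)u^2(y)}{|x-y|}\,dx\,dy
 \leq C\,\|u^2\|_{6/5}^2=C\,\|u\|_{12/5}^4.
\]
Next I would invoke \eqref{gn} at exponent $12/5$, where the interpolation parameter computes to $\tau=3(\tfrac12-\tfrac{5}{12})=\tfrac14$, so that $\|u\|_{12/5}\le C\|\nabla u\|_2^{1/4}\|u\|_2^{3/4}$. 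Raising this to the fourth power and substituting $\|u\|_2=c^{1/2}$ yields the bound $C\|\nabla u\|_2\,c^{3/2}$, which is exactly the claimed inequality with $K_H:=C$.

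For part (ii), I would apply \eqref{gn} directly at exponent $p$, where $\tau=N(\tfrac12-\tfrac1p)=\frac{3(p-2)}{2p}$. Raising the resulting inequality to the $p$-th power gives
\[
 \|u\|_p^p\leq K_{GN}\,\|\nabla u\|_2^{p\tau}\,\|u\|_2^{p(1-\tau)},
\]
and a short computation shows $p\tau=\frac{3(p-2)}{2}$ and $p(1-\tau)=\frac{6-p}{2}$. Substituting $\|u\|_2=c^{1/2}$ converts the last factor into $c^{(6-p)/4}$, which produces the stated estimate.

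The only delicate point in each part is the exponent bookkeeping: correctly identifying the admissible HLS pair $p=q=6/5$ in (i), and verifying that the Gagliardo--Nirenberg weights collapse precisely to the advertised powers of $\|\nabla u\|_2$ and $c$ in (ii). Neither step presents a genuine analytic obstacle---the content lies entirely in matching the scaling exponents---so I expect the proof to be brief, with the constants $K_H$ and $K_{GN}$ absorbing the universal HLS and Gagliardo--Nirenberg factors.
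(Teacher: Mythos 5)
Your proposal is correct and follows essentially the same route as the paper: Hardy--Littlewood--Sobolev with $f=g=u^2$ and exponents $6/5$ to reduce the double integral to $\|u\|_{12/5}^4$, then Gagliardo--Nirenberg at $12/5$ for (i), and Gagliardo--Nirenberg directly at $p$ for (ii), with $\|u\|_2^2=c$ supplying the powers of $c$. The exponent bookkeeping checks out, so nothing further is needed.
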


\begin{proof}
From \eqref{hls} and \eqref{gn} it follows that
 \[
 \int_{\mathbb{R}^3}\int_{\mathbb{R}^3}\frac{|u(x)|^2|u(y)|^2}{|x-y|}\,dx\,dy
\leq K_1\|u\|_{12/5}^4\leq K_H\|\nabla u\|_{2}\|u\|_{2}^3,
 \]
which implies (i).
 In view of \eqref{gn}, we derive
 \[
 \|u\|_{p}^p\leq K_{GN}\|\nabla u\|_2^{\frac{3(p-2)}{2}}\|u\|_2^{\frac{6-p}{2}},
 \]
 which leads to (ii).
\end{proof}

\begin{lemma}\label{npidentity}
 Let $\mu>0$ and $2<p<6$. If $(u, \lambda)\in H^1(\mathbb{R}^3)\times \mathbb{R}$
 weakly solves
 \begin{equation}\label{npsbpeq}
 -\Delta u+\phi_u u=\lambda u+\mu|u|^{p-2}u+|u|^4u,
 \end{equation}
then $Q(u)=0$, where $Q(u)$ is defined by \eqref{pohozaevtypeid}.
\end{lemma}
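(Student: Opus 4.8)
The plan is to exploit the fact that $Q(u)$ is, by its very definition in \eqref{pohozaevtypeid}, nothing but $\Phi_u'(1)=\frac{d}{dt}\big|_{t=1}I(u^t)$, and to show that this derivative vanishes whenever $u$ solves \eqref{npsbpeq}. The conceptual engine is the $L^2$-invariance of the scaling $u^t(x)=t^{3/2}u(tx)$: since $\|u^t\|_2^2=\|u\|_2^2$ for every $t>0$, introducing the free functional $J(v):=I(v)-\frac{\lambda}{2}\|v\|_2^2$ we may write $I(u^t)=J(u^t)+\frac{\lambda}{2}\|u\|_2^2$, and the additive constant disappears under differentiation. Consequently $Q(u)=\frac{d}{dt}\big|_{t=1}J(u^t)$. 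Because $(u,\lambda)$ weakly solves \eqref{npsbpeq}, $u$ is a critical point of $J$ on $H^1(\mathbb{R}^3)$, i.e. $J'(u)=0$ in $H^{-1}(\mathbb{R}^3)$; so if the path $t\mapsto u^t$ is differentiable into $H^1$ at $t=1$ with derivative $w:=\frac{3}{2}u+x\cdot\nabla u$, the chain rule yields $Q(u)=\langle J'(u),w\rangle=0$.

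First I would establish enough regularity to make this rigorous. By standard elliptic regularity applied to \eqref{npsbpeq} (treating $\phi_u u$, $\mu|u|^{p-2}u$ and $|u|^4u$ as the right-hand side and using Lemma \ref{nonlocal property} for $\phi_u\in\mathcal{D}$), the weak solution $u$ is smooth and decaying enough that $w=\frac{3}{2}u+x\cdot\nabla u$ is an admissible test direction and the integrals below are finite. In particular each term of $I(u^t)$ is a genuinely differentiable function of $t$: the kinetic, $L^p$ and $L^6$ terms are explicit powers of $t$ multiplying fixed finite integrals, while for the nonlocal term one differentiates under the double integral, the differentiation being justified by dominated convergence with a dominating function supplied by \eqref{hls} and Lemma \ref{CDestimate}.

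As a robust alternative that avoids checking $H^1$-differentiability of the path directly, I would derive $Q(u)=0$ by combining two scalar identities. Testing \eqref{npsbpeq} against $u$ yields the Nehari-type relation $\|\nabla u\|_2^2+\int_{\mathbb{R}^3}\phi_uu^2\,dx=\lambda\|u\|_2^2+\mu\|u\|_p^p+\|u\|_6^6$. Testing against $x\cdot\nabla u$ (equivalently, computing $\frac{d}{d\sigma}\big|_{\sigma=1}$ of the pure $L^2$-scaling $u(\cdot/\sigma)$) produces the Pohozaev identity; its linear combination with the Nehari relation, chosen to eliminate $\lambda\|u\|_2^2$, reassembles exactly the five terms of \eqref{pohozaevtypeid} and forces $Q(u)=0$.

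The main obstacle is the nonlocal Bopp-Podolsky term. Unlike the Schr\"odinger-Poisson case, the kernel $\frac{1-e^{-|x-y|}}{|x-y|}$ is not homogeneous, so under the scaling it generates two distinct contributions: differentiating $\frac{t}{16\pi}\int_{\mathbb{R}^3}\int_{\mathbb{R}^3}\frac{1-e^{-|x-y|/t}}{|x-y|}u^2(x)u^2(y)\,dx\,dy$ at $t=1$ produces both $\frac{1}{16\pi}\int_{\mathbb{R}^3}\int_{\mathbb{R}^3}\frac{1-e^{-|x-y|}}{|x-y|}u^2u^2$ and the extra term $-\frac{1}{16\pi}\int_{\mathbb{R}^3}\int_{\mathbb{R}^3}e^{-|x-y|}u^2u^2$ recorded in \eqref{pohozaevtypeid}. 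Correspondingly, in the direct route one must carefully account for this exponential piece when computing the Pohozaev contribution of $\int_{\mathbb{R}^3}\phi_uu^2\,dx$, using the equation $-\Delta\phi_u+\Delta^2\phi_u=4\pi u^2$ satisfied by $\phi_u$ together with integration by parts and a cutoff to control the terms at infinity. Verifying that these nonlocal contributions match precisely is where the bulk of the care is required; the remaining local terms follow from routine integration by parts.
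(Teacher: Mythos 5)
Your ``robust alternative'' is exactly the paper's proof: the authors quote the Pohozaev identity for \eqref{npsbpeq} from \cite[Lemma 4.2]{CTcritical2020} (which already carries the extra exponential term produced by the non-homogeneous Bopp--Podolsky kernel), test the equation against $u$ to obtain the Nehari relation, and take the linear combination eliminating $\lambda$ to conclude $Q(u)=0$. Your primary chain-rule route is equivalent in substance but, as you yourself note, hinges on the $H^1$-differentiability of $t\mapsto u^t$, which is precisely the regularity issue the Nehari--Pohozaev combination sidesteps; so the proposal is correct and essentially coincides with the paper's argument.
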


\begin{proof}
Using the Pohozaev identity in \cite[Lemma 4.2]{CTcritical2020} yields
 \begin{equation}\label{pohoid}
\begin{aligned}
 &\frac{1}{2}\int_{\mathbb{R}^3}|\nabla u|^2\,dx
 +\frac{5}{4}\int_{\mathbb{R}^3}\phi_uu^2\,dx
 +\frac{1}{16\pi}\int_{\mathbb{R}^3}\int_{\mathbb{R}^3}e^{-|x-y|}u^2(x)u^2(y)
 \,dx\,dy\\
 &=\frac{3\lambda}{2}\int_{\mathbb{R}^3}|u|^2\,dx
 +\frac{3\mu}{p}\int_{\mathbb{R}^3}|u|^p\,dx
 +\frac12\int_{\mathbb{R}^3}|u|^6\,dx=0.
 \end{aligned}
\end{equation}
 Multiplying both sides of \eqref{npsbpeq} by $u$ and integrating on $\mathbb{R}^3$
leads to
 \begin{equation}\label{neharid}
 \int_{\mathbb{R}^3}|\nabla u|^2\,dx+\int_{\mathbb{R}^3}\phi_uu^2\,dx
=\lambda\int_{\mathbb{R}^3}|u|^2\,dx+\mu\int_{\mathbb{R}^3}|u|^p\,dx
+\int_{\mathbb{R}^3}|u|^6\,dx.
 \end{equation}
By combining \eqref{pohoid} and \eqref{neharid}, we obtain
 \begin{align*}
 &\int_{\mathbb{R}^3}|\nabla u|^2\,dx
+\frac{1}{16\pi}\int_{\mathbb{R}^3}\int_{\mathbb{R}^3}
 \frac{1-e^{-|x-y|}}{|x-y|}u^2(x)u^2(y)\,dx\,dy\\
 &=\frac{1}{16\pi}\int_{\mathbb{R}^3}\int_{\mathbb{R}^3}e^{-|x-y|}u^2(x)u^2(y)\,dx\,dy
+\frac{3\mu\,(p-2)}{2p} \int_{\mathbb{R}^3}|u|^p\,dx+\int_{\mathbb{R}^3}|u|^6\,dx.
 \end{align*}
Therefore, we arrive at the desired result.
\end{proof}

To ensure the condition \eqref{eMD}, we define
\begin{gather*}
A(u):=\int_{\mathbb{R}^3}|\nabla u|^2\,dx,\quad
B(u):=\int_{\mathbb{R}^3}\phi_uu^2\,dx, \\
C(u):=\int_{\mathbb{R}^3}|u|^p\,dx, \quad
D(u):=\int_{\mathbb{R}^3}|u|^6\,dx, \\
T(u):=\frac{1}{4}B(u)-\frac{\mu}{p}C(u)-\frac{1}{6}D(u).
\end{gather*}
Thus, the functional $I(u)$ can be simply re-written as
\[
I(u)=\frac{1}{2}A(u)+T(u).
\]
Next we recall two definitions introduced in \cite{Bellazziniscal}.

\begin{definition}\label{md condition defi1} \rm
 Let $u\in H^1(\mathbb{R}^3)$ with $u\neq0$. A continuous path
$g_u:\theta\in\mathbb{R}^+\mapsto g_u(\theta)\in H^1(\mathbb{R}^3)$ such that
 $g_u(1)=u$ is said to be a scaling path of $u$ if
$\Theta_{g_u}(\theta):=\|g_u(\theta)\|_2^2/\|u\|_2^2$ is differentiable
and $\Theta'_{g_u}(1)\neq0$. We denote by $G_u$ the set of the scaling paths of $u$.
\end{definition}

\begin{definition}\label{md condition defi2} \rm
 Let $u\neq0$ be fixed and $g_u\in G_u$. We say that the scaling path $g_u$ is
admissible for the functional $I$ if
 \[
 h_{g_u}(\theta):=I(g_u(\theta))-\Theta_{g_u}(\theta)I(u),~\theta\ge0
 \] is a differentiable function.
\end{definition}

The following lemma is regarding the splitting properties of the term $T$,
see \cite[Lemma 2.8]{HLC2022} and \cite[Lemma B.2]{DS2019}.

\begin{lemma}\label{split property of T}
 If $p\in(2,10/3)$, we let $\{u_n\}\subset V(c)$ be a minimizing sequence
for $m(c)$ such that $u_n\rightharpoonup u\neq0$. Then $T$ satisfies the following
properties:
 \begin{enumerate}
 \item[(i)] $T(u_n-u)+T(u)=T(u_n)+o_n(1)$; and
 \item[(ii)] $T(\alpha_n(u_n-u))-T(u_n-u)=o_n(1)$, where
$\alpha_n=\|u_n\|_2^2-\|u\|_2^2/\|u_n-u\|_2^2$.
 \end{enumerate}
\end{lemma}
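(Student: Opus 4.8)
The plan is to read both assertions as Brezis--Lieb type decompositions and to prove them summand by summand, using $T=\tfrac14 B-\tfrac{\mu}{p}C-\tfrac16 D$. Since $\{u_n\}\subset V(c)$ we have $\|u_n\|_2^2=c$ and $\|\nabla u_n\|_2^2<\rho_0$, so $\{u_n\}$ is bounded in $H^1(\mathbb{R}^3)$; after passing to a subsequence, $u_n\rightharpoonup u$ in $H^1(\mathbb{R}^3)$, $u_n\to u$ a.e.\ in $\mathbb{R}^3$, and $u_n\to u$ in $L^s_{\mathrm{loc}}(\mathbb{R}^3)$ for every $s\in[1,6)$. Write $v_n:=u_n-u$, so that $v_n\rightharpoonup0$ in $H^1(\mathbb{R}^3)$, $v_n\to0$ in $L^s_{\mathrm{loc}}(\mathbb{R}^3)$ for $s\in[1,6)$, and $v_n\rightharpoonup0$ in $L^6(\mathbb{R}^3)$.

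First I would dispose of part (i) for the two local terms $C(u)=\|u\|_p^p$ and $D(u)=\|u\|_6^6$ by the classical Brezis--Lieb lemma: $\{u_n\}$ is bounded in $L^p$ and in $L^6$ and $u_n\to u$ a.e., whence $C(u_n)=C(v_n)+C(u)+o_n(1)$ and $D(u_n)=D(v_n)+D(u)+o_n(1)$. The heart of (i) is the nonlocal term. Setting $\mathcal{B}(f,g):=\int_{\mathbb{R}^3}\int_{\mathbb{R}^3}K(x-y)f(x)g(y)\,dx\,dy$ with $K(x-y)=\frac{1-e^{-|x-y|}}{|x-y|}$, so that $B(u)=\frac1{4\pi}\mathcal{B}(u^2,u^2)$, I expand $u_n^2=v_n^2+2uv_n+u^2$ and use the symmetric bilinearity of $\mathcal{B}$. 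This yields the two ``diagonal'' contributions $\mathcal{B}(v_n^2,v_n^2)$ and $\mathcal{B}(u^2,u^2)$ together with four cross terms, each carrying at least one factor $v_n$ paired against the fixed limit $u$, and the task is to show every cross term is $o_n(1)$.

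To kill the cross terms I would use the identity $\int_{\mathbb{R}^3}K(x-y)u^2(y)\,dy=4\pi\phi_u(x)$, the bound $\phi_u\in L^6(\mathbb{R}^3)$ from Lemma \ref{nonlocal property}, and the Hardy--Littlewood--Sobolev bound $|\mathcal{B}(f,g)|\le C\|f\|_{6/5}\|g\|_{6/5}$, which follows from \eqref{hls} because $0\le K(x-y)\le|x-y|^{-1}$. Then $\mathcal{B}(uv_n,uv_n)$ and $\mathcal{B}(v_n^2,uv_n)$ are controlled by $\|uv_n\|_{6/5}$ (times bounded factors), and $\|uv_n\|_{6/5}\to0$ by splitting $\mathbb{R}^3$ into a large ball $B_R$, where $v_n\to0$ strongly, and its complement, where the tail of $u\in L^3(\mathbb{R}^3)$ is small; while $\mathcal{B}(v_n^2,u^2)=4\pi\int_{\mathbb{R}^3}\phi_uv_n^2$ and $\mathcal{B}(uv_n,u^2)=4\pi\int_{\mathbb{R}^3}\phi_u u\,v_n$ are handled by the same ball/complement splitting, using $\phi_u\in L^6$, $u\in L^3$, and the local strong convergence $v_n\to0$. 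Collecting the three expansions gives $B(u_n)=B(v_n)+B(u)+o_n(1)$ --- the statement recorded in \cite[Lemma B.2]{DS2019} and \cite[Lemma 2.8]{HLC2022} --- and hence (i).

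For part (ii) I would first observe that $\|v_n\|_2^2=\|u_n\|_2^2-\|u\|_2^2+o_n(1)=c-\|u\|_2^2+o_n(1)$, since $\langle u_n,u\rangle_2\to\|u\|_2^2$; in the relevant dichotomy case $\|u\|_2^2<c$ the denominator is bounded away from $0$, so that $\alpha_n=(\|u_n\|_2^2-\|u\|_2^2)/\|v_n\|_2^2\to1$. Consequently $\alpha_nv_n-v_n=(\alpha_n-1)v_n\to0$ in $H^1(\mathbb{R}^3)$ because $\{v_n\}$ is bounded, and since $T=I-\tfrac12A$ is locally Lipschitz (as $I$ is $C^1$ and $A$ is smooth) while $\{v_n\}$ and $\{\alpha_nv_n\}$ stay in one bounded set, $|T(\alpha_nv_n)-T(v_n)|\le L\,|\alpha_n-1|\,\|v_n\|_{H^1}\to0$, which is (ii). The main obstacle is the nonlocal term in (i): in contrast to the local $C$ and $D$ terms, $\mathcal{B}(v_n^2,u^2)$ and $\mathcal{B}(uv_n,u^2)$ do not vanish by weak convergence alone, and disposing of them requires combining the HLS inequality, the integrability $\phi_u\in L^6$ (so that $\phi_u u\in L^2$), the local strong convergence of $v_n$, and the decay of $u$ at infinity to control both the interaction and the tail contributions; once this splitting is secured, the remaining steps are routine.
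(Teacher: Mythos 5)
Your argument is correct, and it is worth noting that the paper itself gives no proof of this lemma: it simply cites \cite[Lemma B.2]{DS2019} and \cite[Lemma 2.8]{HLC2022}, whose content is exactly the Brezis--Lieb-type splitting you reconstruct. For part (i) your treatment coincides with the standard one: Brezis--Lieb for the local terms $C$ and $D$, bilinear expansion of $\mathcal{B}(u_n^2,u_n^2)$ for $B$, and the cross terms killed via $0\le K\le |x-y|^{-1}$, Hardy--Littlewood--Sobolev, $\phi_u\in L^6$, and the ball/complement splitting; this is sound. For part (ii) you take a slightly different route from the usual one: the cited lemmas exploit that each piece of $T$ is homogeneous under scalar multiplication, namely $B(\alpha v)=\alpha^4B(v)$, $C(\alpha v)=\alpha^pC(v)$, $D(\alpha v)=\alpha^6D(v)$, so that $T(\alpha_n v_n)-T(v_n)=\tfrac14(\alpha_n^4-1)B(v_n)-\tfrac{\mu}{p}(\alpha_n^p-1)C(v_n)-\tfrac16(\alpha_n^6-1)D(v_n)\to0$ once $\alpha_n\to1$ and the quantities are bounded; your local-Lipschitz argument (justified because $I'$ is bounded on bounded sets) reaches the same conclusion but is marginally heavier. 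One caveat you correctly flag but should make explicit: $\alpha_n\to1$ only when $\|u_n-u\|_2$ stays bounded away from zero, i.e.\ in the dichotomy regime $\|u\|_2^2<c$ where the lemma is actually invoked in the Bellazzini--Siciliano scheme; in the complementary case $\alpha_n$ is a $0/0$ expression and the statement is used vacuously.
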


The following proposition provides us a useful criterion for the
 condition \eqref{eMD}, \cite[Theorem 2.1]{Bellazziniscal}.

\begin{proposition}\label{strong subadd}
Let $T\in C^1(H^1(\mathbb{R}^3),\mathbb{R})$ satisfy Lemma \ref{split property of T}
(i) and (ii). Assume that for every $c>0$, all the minimizing sequences
$\{u_n\}$ for $m(c)$ have a weak limit up to translations different from zero.
Assume that \eqref{weak add} and the following conditions hold
 \begin{gather}\label{md condi1}
 -\infty<m(c)<0,\quad \text{for all }c>0\; (I(0)=0), \\
 \label{md condi2}
 c\mapsto m(c)\quad \text{is continuous}, \\
\label{md condi3}
 \lim_{c\to0}\frac{m(c)}{c}=0.
 \end{gather}
Then, for each $c>0$, the set
 \[
 M(c)=\cup_{\tilde{c}\in(0,c]}\{u\in S(\tilde{c}): I(u)=m(\tilde{c})\}
 \]
is nonempty. In addition, if
 \begin{equation}\label{md condi4}
 \forall u\in M(c),\; \exists g_u\in G_u\text{ is admissible such that }
\frac{d}{d\theta}h_{g_u}(\theta)|_{\theta=1}\neq0,
 \end{equation}
then the condition \eqref{eMD} holds. Moreover, if $\{u_n\}$ is a minimizing
sequence weakly convergent to a certain $u$~(necessarily~$\neq$~0),
then $\|u_n-u\|_{H^1(\mathbb{R}^3)}\rightarrow0$ and $I(u)=m(c)$.
\end{proposition}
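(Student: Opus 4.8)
The plan is to adapt the scaling argument of Bellazzini--Siciliano \cite{Bellazziniscal} and to organize the proof into three stages: nonemptiness of $M(c)$, the monotonicity \eqref{eMD}, and the strong convergence in the ``moreover'' clause. Throughout I combine the Brezis--Lieb splitting of $A$ with the splitting properties of $T$ from Lemma \ref{split property of T}, which together give the energy decomposition $I(u_n)=I(u)+I(u_n-u)+o_n(1)$.

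\textbf{Stage 1 (nonemptiness of $M(c)$).} Fix $c>0$ and a minimizing sequence $\{u_n\}$ for $m(c)$; by \eqref{md condi1} it is bounded, and by hypothesis, up to translation, $u_n\rightharpoonup u\neq0$. Set $\tilde c:=\|u\|_2^2$, so that $\|u_n-u\|_2^2\to c-\tilde c$. Renormalizing $u_n-u$ to $S(c-\tilde c)$ by the factor $\alpha_n\to1$ of Lemma \ref{split property of T}(ii) gives $I(\alpha_n(u_n-u))=I(u_n-u)+o_n(1)\ge m(c-\tilde c)$, while $I(u)\ge m(\tilde c)$. Feeding these into the energy decomposition and using \eqref{weak add},
\[
m(c)=I(u)+I(u_n-u)+o_n(1)\ge m(\tilde c)+m(c-\tilde c)+o_n(1)\ge m(c)+o_n(1),
\]
so every inequality is an equality; in particular $I(u)=m(\tilde c)$ and $m(c)=m(\tilde c)+m(c-\tilde c)$. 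Hence $u\in M(c)$ and $M(c)\neq\emptyset$.

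\textbf{Stage 2 (the monotonicity \eqref{eMD}).} This is the heart of the matter. Let $u\in M(c)$ be a minimizer at level $\tilde c=\|u\|_2^2$ and let $g_u\in G_u$ be the admissible path of \eqref{md condi4}. Since $g_u(\theta)\in S(\Theta_{g_u}(\theta)\tilde c)$, minimality gives $I(g_u(\theta))\ge m(\Theta_{g_u}(\theta)\tilde c)$, with equality at $\theta=1$, whence
\[
h_{g_u}(\theta)\ge m\big(\Theta_{g_u}(\theta)\tilde c\big)-\Theta_{g_u}(\theta)m(\tilde c)
=\Theta_{g_u}(\theta)\,\tilde c\left(\frac{m(\Theta_{g_u}(\theta)\tilde c)}{\Theta_{g_u}(\theta)\tilde c}-\frac{m(\tilde c)}{\tilde c}\right),
\]
together with $h_{g_u}(1)=0$. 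Suppose \eqref{eMD} failed. Following \cite{Bellazziniscal}, one uses the continuity \eqref{md condi2}, the normalization \eqref{md condi3}, and the nonemptiness from Stage 1 to locate a level $\tilde c$ carrying a minimizer at which $s\mapsto m(s)/s$ does not strictly decrease; then on a one-sided neighbourhood of $\theta=1$ the bracketed factor above keeps a sign that makes the right-hand side nonnegative, so that $h_{g_u}(\theta)\ge 0=h_{g_u}(1)$ there. Since $h_{g_u}$ is differentiable, $\theta=1$ is a one-sided extremum and $\tfrac{d}{d\theta}h_{g_u}(\theta)|_{\theta=1}=0$, contradicting \eqref{md condi4}. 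I expect this stage to be the main obstacle: weak subadditivity alone does \emph{not} yield monotonicity of $m(c)/c$ (consistent with its a priori oscillatory behaviour), and the naive scaling $g_u(\theta)=\sqrt{\theta}\,u$ fails precisely because the nonlocal term $B(u)$ scales like $\theta^2$ and destroys the homogeneity needed to compare $m(\Theta\tilde c)$ with $\Theta\,m(\tilde c)$. The entire weight of the argument therefore rests on producing, at each minimizer, an admissible path with $h_{g_u}'(1)\neq0$.

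\textbf{Stage 3 (strong convergence).} By the computation in the introduction, \eqref{eMD} implies the strict subadditivity \eqref{strict add}. Returning to Stage 1, if $0<\tilde c<c$ then $m(c)=m(\tilde c)+m(c-\tilde c)$ would contradict \eqref{strict add}; hence $\tilde c=c$. Consequently $\|u_n\|_2\to\|u\|_2$, so $u_n\to u$ in $L^2(\mathbb{R}^3)$, and $I(u_n-u)\to0$ by the energy decomposition; combining the vanishing of the $B$- and $C$-terms (Lemma \ref{CDestimate}) with the weak lower semicontinuity of $A$ and the smallness of $\|\nabla\cdot\|_2^2$ inside $V(c)$ forces $A(u_n-u)\to0$. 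Therefore $u_n\to u$ in $H^1(\mathbb{R}^3)$ and $I(u)=m(c)$, as claimed.
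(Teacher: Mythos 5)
First, note that the paper offers no proof of this proposition at all: it is imported wholesale from Bellazzini--Siciliano \cite[Theorem 2.1]{Bellazziniscal}, so there is no in-paper argument to compare against line by line. Your Stage 1 and Stage 3 reproduce the standard splitting argument correctly, modulo the routine check that $u$ and the renormalized remainders actually lie in the sets $V(\tilde c)$ and $V(c-\tilde c)$ over which $m$ is defined here (this needs $\limsup_n\|\nabla u_n\|_2^2<\rho_0$, which follows from $m(c)<0<\inf_{\partial V(c)}I$); this point matters because in the present paper $m$ is a local infimum, not the global one of \cite{Bellazziniscal}.

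The genuine gap is in Stage 2, in two places. (i) The deduction ``$h_{g_u}(\theta)\ge 0=h_{g_u}(1)$ on a one-sided neighbourhood of $\theta=1$, $h_{g_u}$ differentiable, hence $h_{g_u}'(1)=0$'' is false: a one-sided minimum of a differentiable function only controls the sign of the one-sided derivative (take $h(\theta)=\theta-1$, which satisfies $h\ge 0=h(1)$ for $\theta\ge 1$ but $h'(1)=1$). To contradict \eqref{md condi4} you need $h_{g_u}\ge 0$ on a \emph{two-sided} neighbourhood of $\theta=1$. (ii) To obtain such a neighbourhood you must produce a minimizer at a mass level $\bar s$ which is a left-most minimum point of $g(s):=m(s)/s$ on some interval $(0,t]$ with $\bar s<t$; your appeal to ``nonemptiness from Stage 1'' does not deliver this, because $M(c)\neq\emptyset$ only guarantees a minimizer at \emph{some} level $\tilde c\in(0,c]$, not at the level where monotonicity fails. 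The missing step --- which is the actual content of the cited proof --- is: let $\bar s$ be the smallest point of $(0,t]$ where $\min_{(0,t]}g$ is attained (it exists by \eqref{md condi1}--\eqref{md condi3} and $g(0^+)=0$); apply the refined output of Stage 1 at mass $\bar s$ to get a minimizer at some $\tilde c\le\bar s$ together with $m(\bar s)=m(\tilde c)+m(\bar s-\tilde c)$; if $\tilde c<\bar s$ the strict inequalities $g(\tilde c)>g(\bar s)$ and $g(\bar s-\tilde c)>g(\bar s)$ give $m(\bar s)>m(\bar s)$, so in fact $\tilde c=\bar s$. If $g$ were not strictly decreasing one would have $\bar s<t$, the path $g_u$ at this minimizer sweeps a two-sided neighbourhood of $\bar s$ inside $(0,t]$, and only then does $h_{g_u}'(1)=0$ follow, contradicting \eqref{md condi4}. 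Without this localization, Stage 2 --- and hence \eqref{eMD} and the strong convergence of Stage 3, which depend on it --- does not close.
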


\section{Proof of Theorem \ref{thm: main result}}\label{1}

Throughout the whole section, we  assume that $2<p<\frac{8}{3}$, from which we
have $0<\frac{3(p-2)}{2}<1$.
Note that
\begin{equation}\label{functional pro ge}
 I(u)\ge\frac{1}{2}\|\nabla u\|_2^2-\frac{\mu K_{GN}}{p}c^{\frac{6-p}{4}}\|\nabla u\|_2^{\frac{3(p-2)}{2}}-\frac{1}{6\mathcal{S}^3}\|\nabla u\|_2^6
\end{equation}
for any $u\in S(c)$. Now we consider the function $h:\mathbb{R}^+\to\mathbb{R}$, defined by
\begin{align*}
 h_c(t)&:=\frac{1}{2}t^2-\frac{\mu K_{GN}}{p}c^{\frac{6-p}{4}}
 t^{\frac{3(p-2)}{2}}-\frac{1}{6\mathcal{S}^3}t^6\\
 &=t^2[\frac{1}{2}-\frac{\mu K_{GN}}{p}c^{\frac{6-p}{4}}
 t^{\frac{3(p-2)}{2}-2}-\frac{1}{6\mathcal{S}^3}t^4].
\end{align*}
Since $\mu>0$ and $\frac{3(p-2)}{2}<1$, we have $h_c(0^+)=0^-$ and
$h_c(+\infty)=-\infty$. Moreover, the following properties hold
for $h_c$.

\begin{lemma}[{\cite[Lemma 2.1]{JJLV2022}}] \label{functionh}
For any $\mu>0$ there exist a $c_0=c_0(\mu)>0$ and $\rho_0:=\rho_{c_0}>0$
such that $h_{c_0}(\sqrt{\rho_0})=0$ and $h_c(\sqrt{\rho_0})>0$ hold for any
$c\in(0,c_0)$, where $c_0$ and $\rho_0$ are explicitly given by
 \[
 c_0:=\big(\frac{1}{2K}\big)^{3/2}>0,
 \]
 with
 \begin{gather*}
\begin{aligned}
 K&:=\frac{\mu}{p}K_{GN}\Big[-\frac{3(3p-10)\mu K_{GN}\mathcal{S}^3}{4p}
 \Big]^{\frac{3p-10}{3(6-p)}}
+\frac{1}{6\mathcal{S}^3}\Big[-\frac{3(3p-10)\mu K_{GN}\mathcal{S}^3}{4p}
 \Big]^{\frac{8}{3(6-p)}}\\
&\quad >0,
\end{aligned}\\
 \rho_0:=\Big[-\frac{3(3p-10)\mu K_{GN}\mathcal{S}^3}{4p}\Big]^{\frac{4}{3(6-p)}}
 c_0^{1/3}.
 \end{gather*}
\end{lemma}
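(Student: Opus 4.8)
The plan is to reduce everything to the elementary one-variable analysis of the bracketed factor and to exploit a monotonicity of $h_c$ in the parameter $c$. Write $h_c(t)=t^2 k_c(t)$ with
\[
 k_c(t):=\frac12-\frac{\mu K_{GN}}{p}c^{\frac{6-p}{4}}t^{\frac{3p-10}{2}}-\frac{1}{6\mathcal{S}^3}t^4,
\]
so that for $t>0$ the sign of $h_c(t)$ coincides with that of $k_c(t)$. The decisive observation is that, since $6-p>0$, the map $c\mapsto c^{\frac{6-p}{4}}$ is strictly increasing and multiplies a negative term; hence for every fixed $t>0$ the function $c\mapsto k_c(t)$ is strictly decreasing. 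This monotonicity is what will convert a single equality at $c_0$ into a strict inequality for all smaller $c$.

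First I would fix $c$ and determine the shape of $k_c$. Because $p<8/3<10/3$ we have $\frac{3p-10}{2}<0$, so $k_c(0^+)=-\infty$ and $k_c(+\infty)=-\infty$. Differentiating, $k_c'(t)=0$ reduces to $t^{\frac{3(6-p)}{2}}=\kappa\,c^{\frac{6-p}{4}}$ with $\kappa:=-\frac{3(3p-10)\mu K_{GN}\mathcal{S}^3}{4p}>0$; since the exponent $\frac{3(6-p)}{2}$ is positive, this has a unique positive root
\[
 t_c=\kappa^{\frac{2}{3(6-p)}}\,c^{\frac16},
\]
which, by the behaviour at the endpoints, is the unique global maximum of $k_c$.

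Next I would substitute $t_c$ back and track the powers of $c$. The point of the computation is that both negative terms of $k_c(t_c)$ scale exactly like $c^{2/3}$: using $\frac{6-p}{4}+\frac{3p-10}{12}=\frac23$ for the first term and $t_c^4\sim c^{2/3}$ for the second, one finds
\[
 M(c):=\max_{t>0}k_c(t)=k_c(t_c)=\frac12-K\,c^{2/3},
\]
where $K$ is precisely the constant in the statement (the two summands of $K$ being the coefficients of the two $c^{2/3}$ contributions, with $A:=\kappa^{\frac{2}{3(6-p)}}$ giving $A^{\frac{3p-10}{2}}=\kappa^{\frac{3p-10}{3(6-p)}}$ and $A^4=\kappa^{\frac{8}{3(6-p)}}$). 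Since $M$ is continuous, strictly decreasing, with $M(0^+)=\frac12>0$ and $M(+\infty)=-\infty$, there is a unique $c_0>0$ solving $M(c_0)=0$, namely $c_0=(\tfrac{1}{2K})^{3/2}$; I then set $\rho_0:=t_{c_0}^2=\kappa^{\frac{4}{3(6-p)}}c_0^{1/3}$, matching the stated formula.

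Finally the conclusion is immediate from the monotonicity in $c$. At $c=c_0$ the maximum is attained at $t_{c_0}=\sqrt{\rho_0}$ with value $0$, so $h_{c_0}(\sqrt{\rho_0})=\rho_0\,k_{c_0}(\sqrt{\rho_0})=\rho_0 M(c_0)=0$. For any $c\in(0,c_0)$, evaluating the strictly decreasing map $c\mapsto k_c(\sqrt{\rho_0})$ at the \emph{fixed} point $\sqrt{\rho_0}$ gives $k_c(\sqrt{\rho_0})>k_{c_0}(\sqrt{\rho_0})=0$, whence $h_c(\sqrt{\rho_0})=\rho_0\,k_c(\sqrt{\rho_0})>0$. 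I expect the only genuinely delicate part to be the exponent bookkeeping in the third step — verifying that the two negative terms really combine into a single $c^{2/3}$ factor and reproduce the exact constants $K$, $c_0$, $\rho_0$; the existence statement itself rests only on the endpoint behaviour and the sign monotonicity, both of which are routine.
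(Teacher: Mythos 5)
Your proof is correct: the reduction to the bracketed factor $k_c$, the location of its unique maximizer $t_c=\kappa^{2/(3(6-p))}c^{1/6}$, the exponent bookkeeping showing $\max_t k_c(t)=\tfrac12-Kc^{2/3}$ with exactly the stated $K$, $c_0$, $\rho_0$, and the final monotonicity-in-$c$ step all check out. The paper itself gives no proof of this lemma (it is quoted from \cite[Lemma 2.1]{JJLV2022}), and your argument is precisely the computation that the explicit constants in the statement presuppose, so there is nothing to add.
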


\begin{lemma}[{\cite[Lemma 2.2]{JJLV2022}}] \label{functionh1}
 Let $(c_1,\rho_1)\in(0,\infty)\times(0,\infty)$ satisfy
$h_{c_1}(\sqrt{\rho_1})\ge0$. Then for any $c_2\in(0,c_1]$ there holds
 \[
 h_{c_2}(\sqrt{\rho_2})\ge0,\quad \text{if }
\rho_2\in\big[\frac{c_2}{c_1}\rho_1,\rho_1\big].
 \]
\end{lemma}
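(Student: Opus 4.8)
The plan is to reduce the claimed inequality to a statement about the sign of a single bracketed expression obtained by factoring a power of $\rho$ out of $h_c(\sqrt{\rho})$. Setting $t=\sqrt{\rho}$ and using $t^{3(p-2)/2}=\rho^{3(p-2)/4}$, $t^6=\rho^3$, one writes
\[
 h_c(\sqrt{\rho})=\rho\Big[\tfrac12-\tfrac{\mu K_{GN}}{p}c^{\frac{6-p}{4}}\rho^{\frac{3p-10}{4}}-\tfrac{1}{6\mathcal S^3}\rho^2\Big]=:\rho\,F(c,\rho),
\]
where I have used $\tfrac{3(p-2)}{4}-1=\tfrac{3p-10}{4}$. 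Since $\rho>0$, the sign of $h_c(\sqrt{\rho})$ equals the sign of $F(c,\rho)$, so it suffices to prove $F(c_2,\rho_2)\ge0$ under the hypothesis $F(c_1,\rho_1)\ge0$.

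Next I would compare $F(c_2,\rho_2)$ with $F(c_1,\rho_1)$ term by term. Writing $F(c,\rho)=\tfrac12-P(c,\rho)-R(\rho)$ with $P(c,\rho)=\tfrac{\mu K_{GN}}{p}c^{\frac{6-p}{4}}\rho^{\frac{3p-10}{4}}$ and $R(\rho)=\tfrac{1}{6\mathcal S^3}\rho^2$, the term $R$ is increasing in $\rho$, so $\rho_2\le\rho_1$ immediately gives $R(\rho_2)\le R(\rho_1)$. For $P$ I would form the ratio
\[
 \frac{P(c_2,\rho_2)}{P(c_1,\rho_1)}=\Big(\frac{c_2}{c_1}\Big)^{\frac{6-p}{4}}\Big(\frac{\rho_2}{\rho_1}\Big)^{\frac{3p-10}{4}},
\]
noting that $\tfrac{6-p}{4}>0$ and $\tfrac{3p-10}{4}<0$ because $2<p<8/3$.

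The subtle point — and the step that reveals why the precise interval $[\tfrac{c_2}{c_1}\rho_1,\rho_1]$ appears — is that $F$ is \emph{not} monotone in $\rho$ alone (it tends to $-\infty$ both as $\rho\to0^+$, through the singular power, and as $\rho\to\infty$, through the $\rho^2$ term), so one cannot simply lower $\rho$. Instead I would invoke the lower bound $\rho_2\ge\tfrac{c_2}{c_1}\rho_1$, i.e. $\tfrac{\rho_2}{\rho_1}\ge\tfrac{c_2}{c_1}>0$: since the exponent $\tfrac{3p-10}{4}$ is negative, raising the larger base to it yields the smaller value, so $\big(\tfrac{\rho_2}{\rho_1}\big)^{\frac{3p-10}{4}}\le\big(\tfrac{c_2}{c_1}\big)^{\frac{3p-10}{4}}$. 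Substituting this, the two $c$-powers combine as $\tfrac{6-p}{4}+\tfrac{3p-10}{4}=\tfrac{p-2}{2}>0$, giving
\[
 \frac{P(c_2,\rho_2)}{P(c_1,\rho_1)}\le\Big(\frac{c_2}{c_1}\Big)^{\frac{p-2}{2}}\le1,
\]
where the final inequality uses $c_2\le c_1$. Hence $P(c_2,\rho_2)\le P(c_1,\rho_1)$.

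Combining the two comparisons yields $F(c_2,\rho_2)=\tfrac12-P(c_2,\rho_2)-R(\rho_2)\ge\tfrac12-P(c_1,\rho_1)-R(\rho_1)=F(c_1,\rho_1)\ge0$, and multiplying by $\rho_2>0$ recovers $h_{c_2}(\sqrt{\rho_2})\ge0$, as desired. The only genuinely delicate part is recognizing that the lower endpoint $\tfrac{c_2}{c_1}\rho_1$ is exactly calibrated so that the gain in the singular factor $\rho^{\frac{3p-10}{4}}$ coming from shrinking $\rho$ is offset by the decay of the mass factor $c^{\frac{6-p}{4}}$; once the two constraints $\rho_2\le\rho_1$ and $\rho_2\ge\tfrac{c_2}{c_1}\rho_1$ are matched to the two terms of $F$, the rest is elementary monotonicity of power functions.
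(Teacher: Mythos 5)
Your proof is correct, and it is essentially the argument of the cited source: the paper itself gives no proof of this lemma, deferring to \cite[Lemma 2.2]{JJLV2022}, where the same factorization $h_c(\sqrt{\rho})=\rho\,F(c,\rho)$ is used and the two negative terms of $F$ are controlled exactly as you do, the $\rho^2$ term by $\rho_2\le\rho_1$ and the singular term by $\rho_2\ge\frac{c_2}{c_1}\rho_1$ together with the sign conditions $\frac{3p-10}{4}<0$ and $\frac{p-2}{2}>0$. No gaps; your remark that the lower endpoint $\frac{c_2}{c_1}\rho_1$ is calibrated precisely so the mass factor absorbs the singular power is the right way to see why the interval is what it is.
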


\begin{remark} \rm
 For $p\in(2,10/3)$, from the expression of $h_c(t)$ we can deduce that
$h_c(0^+)=0^-$ and $h_c(+\infty)=-\infty$, which means that
Lemma \ref{functionh} also holds in such a case. However, taking into account
the geometrical structure of the fiber map $\Phi_u(t)$, we have to reduce the
 range of $p$ to $p\in(2,\frac{8}{3})$, see Lemma \ref{mcproperty} below.
\end{remark}

\begin{remark}\label{remark positive} \rm
According to Lemmas \ref{functionh} and \ref{functionh1}, it is not difficult to
see that $h_{c_0}(\sqrt{\rho_0})=0$ and $h_{c}(\sqrt{\rho_0})>0$ for all
$c\in(0,c_0)$.
\end{remark}

\begin{lemma}\label{npidentity coer}
 For $c\in(0,c_0)$, $I(u)$ restricted to $\Lambda(c)$ is coercive on
$H^1(\mathbb{R}^3)$. Namely, if $\{u_n\}\subset H^1(\mathbb{R}^3)$ satisfies
$\|u_n\|_{H^1(\mathbb{R}^3)}\to+\infty$, then $I(u_n)\to+\infty$.
\end{lemma}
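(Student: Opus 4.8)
The plan is to remove the Sobolev critical term from $I$ by exploiting the constraint $Q(u)=0$ that defines $\Lambda(c)$, thereby reducing the question to a competition between the gradient term and the $L^2$-subcritical term. First I would recall the abbreviations $A(u),B(u),C(u),D(u)$ already introduced in the paper and write $E(u):=\frac{1}{16\pi}\int\int e^{-|x-y|}u^2(x)u^2(y)\,dx\,dy\ge0$ for the exponential double integral. Since $\frac{1}{16\pi}\int\int\frac{1-e^{-|x-y|}}{|x-y|}u^2u^2=\frac14 B(u)$, the two relevant functionals read $I(u)=\frac12 A(u)+\frac14 B(u)-\frac{\mu}{p}C(u)-\frac16 D(u)$ and $Q(u)=A(u)+\frac14 B(u)-E(u)-\frac{3\mu(p-2)}{2p}C(u)-D(u)$.

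Next, on $\Lambda(c)$ I would use $Q(u)=0$ to solve for $D(u)=A(u)+\frac14 B(u)-E(u)-\frac{3\mu(p-2)}{2p}C(u)$ and substitute this into $I$. Collecting coefficients yields
\[ I(u)=\frac13 A(u)+\frac{5}{24}B(u)+\frac{\mu(p-6)}{4p}C(u)+\frac16 E(u),\qquad u\in\Lambda(c). \]
The decisive feature is that the critical quantity $D(u)$ has disappeared; moreover, since $p<6$, only the term $\frac{\mu(p-6)}{4p}C(u)$ carries a negative sign, whereas $B(u)\ge0$ by Lemma \ref{nonlocal property}(ii) and $E(u)\ge0$ trivially.

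Discarding the two nonnegative terms and invoking the Gagliardo–Nirenberg estimate of Lemma \ref{CDestimate}(ii), I would obtain
\[ I(u)\ge\frac13\|\nabla u\|_2^2-\frac{\mu(6-p)}{4p}\|u\|_p^p\ge\frac13\|\nabla u\|_2^2-\frac{\mu(6-p)K_{GN}}{4p}c^{\frac{6-p}{4}}\|\nabla u\|_2^{\frac{3(p-2)}{2}}. \]
Because $p\in(2,\frac83)$ forces $0<\frac{3(p-2)}{2}<1<2$, the quadratic gradient term dominates the subcritical power, so the right-hand side tends to $+\infty$ as $\|\nabla u\|_2\to+\infty$. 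Finally, since $\Lambda(c)\subset S(c)$ fixes $\|u\|_2^2=c$, the hypothesis $\|u_n\|_{H^1(\mathbb{R}^3)}\to+\infty$ is equivalent to $\|\nabla u_n\|_2\to+\infty$, whence $I(u_n)\to+\infty$ and coercivity follows.

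As for difficulty, there is no genuine analytic obstacle here; the one point requiring care, and the conceptual heart of the argument, is the algebraic elimination of $D(u)$ via $Q(u)=0$. It is precisely this cancellation of the critical term, combined with the sign $p-6<0$ and the subquadratic growth exponent $\frac{3(p-2)}{2}<2$ guaranteed by $p<\frac83$, that converts a functional which is unbounded below on all of $S(c)$ into one that is coercive once restricted to $\Lambda(c)$.
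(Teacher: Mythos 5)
Your proof is correct and follows essentially the same route as the paper: both use the constraint $Q(u)=0$ to control the critical term $D(u)$ and then apply the Gagliardo--Nirenberg bound of Lemma \ref{CDestimate}(ii) to the remaining subcritical term, so that the $\tfrac13\|\nabla u\|_2^2$ term dominates. The only (harmless) difference is that your exact substitution of $D(u)$ shows the nonlocal term survives with the nonnegative coefficient $\tfrac{5}{24}$ and can simply be discarded, whereas the paper estimates $D(u)\le A(u)+\tfrac14 B(u)$ and therefore also invokes the Hardy--Littlewood--Sobolev bound of Lemma \ref{CDestimate}(i) on $B(u)$, picking up an extra $A(u)^{1/2}$ term in its lower bound.
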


\begin{proof}
Let $u\in\Lambda(c)$. Taking into account  $Q(u)=0$, we have
\begin{equation}\label{npidentity coer eq1}
\begin{aligned}
&A(u)+\frac{1}{4}B(u)-\frac{1}{16\pi}\int_{\mathbb{R}^3}
\int_{\mathbb{R}^3}e^{-|x-y|}u^2(x)u^2(y)\,dx\,dy\\
&=\frac{3\mu(p-2)}{2p}C(u)+D(u).
\end{aligned}
\end{equation}

From Lemma \ref{CDestimate} (i), there exists a constant $C_1>0$ such that
 \[
 B(u)\leq C_1\|\nabla u\|_2c^{3/2}.
 \]
In view of \eqref{npidentity coer eq1}, there exists a constant $C_2>0$ such that
 \[
D(u)\leq A(u)+C_2\|\nabla u\|_2c^{3/2}.
 \]
This together with Lemma \ref{CDestimate} (ii), for some $C_3>0$, leads to
 \begin{align*}
 I(u)&=\frac{1}{2}A(u)+\frac{1}{4}B(u)-\frac{\mu}{p}C(u)-\frac{1}{6}D(u)\\
 &\ge\frac{1}{2}A(u)-\frac{\mu}{p}C(u)-\frac{1}{6}A(u)-\frac{C_2}{6}A(u)^{\frac{1}{2}}c^{3/2}\\
 &\ge\frac{1}{3}A(u)-C_3A(u)^{\frac{3(p-2)}{4}}c^{\frac{6-p}{4}}-\frac{C_2}{6}A(u)^{\frac{1}{2}}c^{3/2},
 \end{align*}
from which we e complete the proof.
\end{proof}

Define
\[
 B_{\rho_0}:=\{u\in H^1(\mathbb{R}^3):\|\nabla u\|_2^2<\rho_0\},\quad
V(c):=S(c)\cap B_{\rho_0}
\]
and consider a minimization problem:
\[
 m(c)=\inf_{u\in V(c)} I(u),~~\text{for any}~~c\in(0,c_0).
\]

\begin{lemma}\label{mcproperty}
 Let $c\in(0,c_0)$. Then the following three assertions hold.
 \begin{enumerate}
 \item[(i)] $m(c)=\inf_{u\in V(c)} I(u)<0<\inf_{u\in\partial V(c)}I(u)$.
 \item[(ii)] The function $c\mapsto m(c)$ is a continuous mapping.
 \item[(iii)] For all $\alpha\in(0,c)$, we have $m(c)\leq m(\alpha)+m(c-\alpha)$.
 \end{enumerate}
\end{lemma}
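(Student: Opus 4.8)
The plan is to treat the three assertions in turn, using the lower bound \eqref{functional pro ge} and the auxiliary function $h_c$ as the common tool, together with the mass-scaling $\sigma_\theta u:=\sqrt{\theta}\,u$ and the dilation $u\mapsto u^t$. Before starting I would record one elementary consequence of \eqref{functional pro ge}: since $h_c(\sqrt{\rho_0})>0$ for $c\in(0,c_0)$ while $h_c(0^+)=0^-$, the first positive zero $t_*(c)$ of $h_c$ satisfies $t_*(c)^2<\rho_0$; hence every $u\in V(c)$ with $I(u)<0$ necessarily obeys $\|\nabla u\|_2^2<\rho_*(c):=t_*(c)^2<\rho_0$, because $I(u)\ge h_c(\|\nabla u\|_2)$ forces $\|\nabla u\|_2$ into the sub-interval $(0,t_*(c))$ of $(0,\sqrt{\rho_0})$ where $h_c<0$.

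For (i), the right-hand inequality is immediate: on $\partial V(c)$ one has $\|\nabla u\|_2=\sqrt{\rho_0}$, so \eqref{functional pro ge} and Remark \ref{remark positive} give $I(u)\ge h_c(\sqrt{\rho_0})>0$, whence $\inf_{\partial V(c)}I\ge h_c(\sqrt{\rho_0})>0$; moreover $m(c)>-\infty$ since $h_c$ is bounded below on $[0,\sqrt{\rho_0}]$. To obtain $m(c)<0$ I fix any $u\in S(c)$ and examine the fiber map \eqref{fibermap}: because $0<\tfrac{3(p-2)}{2}<1$, as $t\to0^+$ the negative term $-\tfrac{\mu}{p}t^{3(p-2)/2}\|u\|_p^p$ dominates the $O(t)$ nonlocal and $O(t^2)$ kinetic contributions, so $\Phi_u(t)<0$ for $t$ small; since also $\|\nabla u^t\|_2^2=t^2\|\nabla u\|_2^2\to0<\rho_0$, such $u^t$ lie in $V(c)$ and $m(c)\le\Phi_u(t)<0$.

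For (ii) I would establish upper and lower semicontinuity via $\sigma_\theta$, which maps $S(c)$ to $S(\theta c)$ and multiplies $A,B,C,D$ by $\theta,\theta^2,\theta^{p/2},\theta^3$. Given $c_n\to c$, for the upper bound I pick a near-optimal $u\in V(c)$ with $I(u)<0$ (so $\|\nabla u\|_2^2<\rho_*(c)<\rho_0$), set $u_n:=\sigma_{c_n/c}u$, which lies in $V(c_n)$ for large $n$, and observe $I(u_n)\to I(u)$ because all four scaling factors tend to $1$; optimizing over $u$ yields $\limsup_n m(c_n)\le m(c)$. For the lower bound I take near-optimal $u_n\in V(c_n)$ with $I(u_n)<m(c_n)+1/n<0$, so $\|\nabla u_n\|_2^2<\rho_*(c_n)$, which stays below some constant $<\rho_0$ once $c_n\le c'<c_0$; then $A(u_n),\dots,D(u_n)$ are uniformly bounded, $\tilde u_n:=\sigma_{c/c_n}u_n\in V(c)$ for large $n$, and $I(\tilde u_n)-I(u_n)\to0$ uniformly, giving $m(c)\le I(\tilde u_n)\le m(c_n)+1/n+o(1)$, i.e. $m(c)\le\liminf_n m(c_n)$.

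The subadditivity (iii) is where the real difficulty lies, and I would prove it by translation. Fix $\alpha\in(0,c)$ and, for $\eps>0$, choose compactly supported $u_1\in V(\alpha)$ and $u_2\in V(c-\alpha)$ with $I(u_i)<m(\cdot)+\eps<0$ (using density together with the openness of $V$ and continuity of $I$). For $|y|$ large the supports of $u_1$ and $u_2(\cdot-y)$ are disjoint, so $w_y:=u_1+u_2(\cdot-y)$ has $\|w_y\|_2^2=c$ exactly and the quantities $A,C,D$ split exactly; the only coupling is the cross term in $B(w_y)$, which is governed by the kernel $\tfrac{1-e^{-|x-z|}}{|x-z|}\to0$ on the separating supports, so $B(w_y)\to B(u_1)+B(u_2)$ and thus $I(w_y)\to I(u_1)+I(u_2)$ as $|y|\to\infty$. \textbf{The main obstacle} is to guarantee $w_y\in V(c)$, i.e. $\|\nabla w_y\|_2^2=\|\nabla u_1\|_2^2+\|\nabla u_2\|_2^2<\rho_0$, since a priori each summand is only bounded by $\rho_*(\alpha)$, resp. $\rho_*(c-\alpha)$, whose sum might exceed $\rho_0$. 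I resolve this with Lemma \ref{functionh1}: applying it with $c_1=c$, $\rho_1=\rho_*(c)$ (admissible because $h_c(\sqrt{\rho_*(c)})=0$) and $c_2=\alpha$ forces $h_\alpha(\sqrt{\tfrac{\alpha}{c}\rho_*(c)})\ge0$, hence $\rho_*(\alpha)\le\tfrac{\alpha}{c}\rho_*(c)$, and likewise $\rho_*(c-\alpha)\le\tfrac{c-\alpha}{c}\rho_*(c)$. Adding gives $\|\nabla w_y\|_2^2<\rho_*(\alpha)+\rho_*(c-\alpha)\le\rho_*(c)<\rho_0$, so $w_y\in V(c)$. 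Therefore $m(c)\le I(w_y)\to I(u_1)+I(u_2)<m(\alpha)+m(c-\alpha)+2\eps$, and letting $\eps\to0$ completes the proof.
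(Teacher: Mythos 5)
Your proposal is correct and follows essentially the same route as the paper: the lower bound via $h_c$ and the fiber map $\Phi_u$ for (i), the mass rescaling $u\mapsto\sqrt{\theta}\,u$ for (ii), and, for (iii), density plus translation to split $A,C,D$ exactly and $B$ up to $o(1)$, with Lemma \ref{functionh1} combined with $I(u_i)<0$ supplying the proportional gradient bounds that keep $u_1+u_2(\cdot-y)$ inside $V(c)$. The only cosmetic difference is that you route the gradient estimate through the first zero $\rho_*(\cdot)$ of $h_{(\cdot)}$ (which tacitly uses that $\{h_c<0\}\cap(0,\sqrt{\rho_0})=(0,t_*(c))$), whereas the paper applies Lemma \ref{functionh1} directly with $\rho_1=\rho_0$ to get $\|\nabla u_1\|_2^2<\tfrac{\alpha}{c}\rho_0$ and $\|\nabla u_2\|_2^2<\tfrac{c-\alpha}{c}\rho_0$.
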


\begin{proof}
(i) For any $u\in\partial V(c)$ we have $\|\nabla u\|_2^2=\rho_0$.
 From \eqref{functional pro ge} it follows that
 \[
  I(u)\ge h_c(\|\nabla u\|_2)=h_c(\sqrt{\rho_0})>0.
 \]
Taking into account $\frac{3(p-2)}{2}<1$, we have $\Phi_u(t)\to0^-$ as $t\to0$.
Therefore, there exists $t_0<1$ small enough such that
$\|\nabla u^{t_0}\|_2^2=t_0^2\|\nabla u\|_2^2<\rho_0$ and $I(u^{t_0})=\Phi_u(t_0)<0$,
which means $m(c)<0$.

(ii) For any $c\in(0,c_0)$, let $\{c_n\}\subset (0,c_0)$ satisfy $c_n\to c$ as
$n\to\infty$. Recall the definition of $m(c_n)<0$. For any $\epsilon>0$ small enough,
there exists $\{u_n\}\subset V(c_n)$ such that
 \[
  I(u_n)\leq m(c_n)+\epsilon,\ ~~I(u_n)<0.
 \]
Let $v_n:=\big(\frac{c}{c_n}\big)^{\sqrt{1/2}}u_n$. Then $v_n\in S(c)$ and by similar
arguments as described in \cite[Lemma 2.6]{JJLV2022}, we see that $v_n\in V(c)$.
Furthermore, we have
 \begin{align*}
  m(c)&\leq I(v_n)\\
&=\frac{1}{2}\frac{c}{c_n}A(u_n)+\frac{1}{4}(\frac{c}{c_n})^2B(u_n)-\frac{\mu}{p}(\frac{c}{c_n})^{\frac{p}{2}}C(u_n)-\frac{1}{6}(\frac{c}{c_n})^3D(u_n)\\
&=I(u_n)+o_n(1)\\
&\leq m(c_n)+\epsilon+o_n(1).
 \end{align*}
Similarly, for any $\epsilon>0$ there exists $u\in V(c)$ such that
 \[
  I(u)\leq m(c)+\epsilon,~~I(u)<0.
 \]
Let $v_n:=\big(\frac{c_n}{c}\big)^{\sqrt{1/2}}u$. Then $v_n\in V(c_n)$. Processing
in an analogous manner, we can obtain
 \[
  m(c_n)\leq I(u_n)=[I(u_n)-I(u)]+I(u)\leq m(c)+\epsilon+o_n(1).
 \]
In view of $\epsilon>0$ being arbitrary, we have $m(c_n)\to m(c)$ as $n\to\infty$
which implies (ii).

(iii) By the fact that $C_0^{\infty}(\mathbb{R}^3)$ is dense in $H^1(\mathbb{R}^3)$,
for any $\epsilon>0$ there exist $u_1\in C_0^{\infty}(\mathbb{R}^3)\cap V(\alpha)$ and
$u_2\in C_0^{\infty}(\mathbb{R}^3)\cap V(c-\alpha)$ satisfying
 \begin{gather}\label{Iu1u2 eq1}
I(u_1)\leq m(\alpha)+\frac{\epsilon}{2},\quad
I(u_2)\leq m(c-\alpha)+\frac{\epsilon}{2}, \\
\label{Iu1u2}
  I(u_1)<0,\quad I(u_2)<0.
\end{gather}
Moreover, for any $n\in\mathbb{N}$, by a translation, we may assume that
 \[
\operatorname{dist}(\operatorname{supp} u_1, \operatorname{supp} u_2)>n.
 \]
 By Lemma \ref{functionh1}, we have $h_{\alpha}(\sqrt{\rho})\ge0$
for any $\rho\in[\frac{\alpha}{c}\rho_0,\rho_0]$ and
$h_{c-\alpha}(\sqrt{\rho})\ge 0$ for each
$\rho\in[\frac{c-\alpha}{c}\rho_0,\rho_0]$. Hence, we can deduce from
\eqref{Iu1u2} that
 \[
  \|\nabla u_1\|_2^2<\frac{\alpha}{c}\rho_0,\quad
\|\nabla u_2\|_2^2<\frac{c-\alpha}{c}\rho_0.
 \]
Let $u=u_1+u_2$. It is easy to verify that $\|u\|_2^2=\|u_1\|_2^2+\|u_2\|_2^2$
and $A(u)=A(u_1)+A(u_2)$. Thus we have
 \[
  \|u\|_2^2=c,\ ~~~\|\nabla u\|_2^2<\rho_0,
 \]
which means $u\in V(c)$.

Notice that
 \begin{align*}
  |u(x)|^2|u(y)|^2
&=|u_1(x)+u_2(x)|^2|u_1(y)+u_2(y)|^2\\
&=|u_1(x)|^2|u_1(y)|^2+|u_1(x)|^2|u_2(y)|^2
+|u_2(x)|^2|u_1(y)|^2 \\
&\quad +|u_2(x)|^2|u_2(y)|^2 +2|u_1(x)|^2u_1(y)u_2(y)+2|u_2(x)|^2u_1(y)u_2(y)\\
&\quad+2|u_1(y)|^2u_1(x)u_2(x)+2|u_2(y)|^2u_1(x)u_2(x)\\
&\quad +4u_1(x)u_2(x)u_1(y)u_2(y).
 \end{align*}
Then we can deduce that
 \begin{align*}
\int_{\mathbb{R}^3}\int_{\mathbb{R}^3}\frac{1-e^{-|x-y|}}{|x-y|}|u_1(x)|^2|u_2(y)|^2\,dx\,dy
&=\int_{\mathbb{R}^3}\int_{\mathbb{R}^3}\frac{1-e^{-|x-y|}}{|x-y|}|u_1(y)|^2|u_2(x)|^2\,dx\,dy\\
&\leq\int_{\mathbb{R}^3}\int_{\mathbb{R}^3}\frac{|u_1(y)|^2|u_2(x)|^2}{|x-y|}\,dx\,dy\\
&=\int_{\operatorname{supp} u_1}\int_{\operatorname{supp} u_2}\frac{|u_1(y)|^2|u_2(x)|^2}{|x-y|}\,dx\,dy\\
&\leq\frac{\alpha(c-\alpha)}{n},
 \end{align*}
 \begin{align*}
&\int_{\mathbb{R}^3}\int_{\mathbb{R}^3}
 \frac{1-e^{-|x-y|}}{|x-y|}|u_1(x)|^2|u_1(y)||u_2(y)|\,dx\,dy\\
&= \int_{\mathbb{R}^3}\int_{\mathbb{R}^3}
 \frac{1-e^{-|x-y|}}{|x-y|}|u_1(y)|^2|u_1(x)||u_2(x)|\,dx\,dy
\leq\frac{\alpha c}{2n},
 \end{align*}
\begin{align*}
&\int_{\mathbb{R}^3}\int_{\mathbb{R}^3}
 \frac{1-e^{-|x-y|}}{|x-y|}|u_2(x)|^2|u_1(y)||u_2(y)|\,dx\,dy\\
&=\int_{\mathbb{R}^3}\int_{\mathbb{R}^3}
 \frac{1-e^{-|x-y|}}{|x-y|}|u_2(y)|^2|u_1(x)||u_2(x)|\,dx\,dy
\leq\frac{(c-\alpha) c}{2n},
\end{align*}
and
\begin{align*}
&\int_{\mathbb{R}^3}\int_{\mathbb{R}^3}
\frac{1-e^{-|x-y|}}{|x-y|}|u_1(x)||u_2(x)||u_1(y)||u_2(y)|\,dx\,dy\\
&\leq \int_{\mathbb{R}^3}\int_{\mathbb{R}^3}
\frac{|u_1(x)||u_2(x)||u_1(y)||u_2(y)|}{|x-y|}\,dx\,dy \leq\frac{c^2}{4n}.
 \end{align*}
Therefore,
\begin{align*}
  B(u)=B(u_1)+B(u_2)+o_n(1).
\end{align*}
Clearly, it holds
\[
  C(u)=C(u_1)+C(u_2),\quad D(u)=D(u_1)+D(u_2).
\]
Therefore, from \eqref{Iu1u2 eq1} it follows that
 \begin{align*}
  m(c)
&\leq I(u)=I(u_1)+I(u_2)+o_n(1)\\
&\leq m(\alpha)+m(c-\alpha)+\epsilon+o_n(1).
 \end{align*}
Since $\epsilon>0$ is arbitrary, we have $m(c)\leq m(\alpha)+m(c-\alpha)$.
Consequently, we have arrived at (iii).
\end{proof}

\begin{lemma}\label{nonvanish pro1}
Let $\{v_n\}\subset B_{\rho_0}$ satisfy $\|v_n\|_p\to0$. Then there exists a
$\beta_0>0$ such that
\[
 I(v_n)\ge\beta_0\|\nabla v_n\|_2^2+o_n(1).
\]
\end{lemma}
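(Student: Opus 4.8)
The plan is to bound $I(v_n)$ from below by discarding the manifestly nonnegative nonlocal term, observing that the subcritical term vanishes, and controlling the critical term via the Sobolev inequality, so that the whole expression collapses to a quadratic-type estimate in $A(v_n)=\|\nabla v_n\|_2^2$ whose leading coefficient is kept positive by the constraint $v_n\in B_{\rho_0}$.

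First I would recall that $\phi_{v_n}\ge0$ by Lemma \ref{nonlocal property}(ii), so $B(v_n)=\int_{\mathbb{R}^3}\phi_{v_n}v_n^2\,dx\ge0$ and this term may be dropped when estimating from below. Next, since $\|v_n\|_p\to0$ by hypothesis, the perturbative term satisfies $-\frac{\mu}{p}C(v_n)=-\frac{\mu}{p}\|v_n\|_p^p=o_n(1)$. For the critical term I would invoke the Sobolev inequality in the form $\|v_n\|_6^2\le\mathcal{S}^{-1}\|\nabla v_n\|_2^2$, which gives $D(v_n)=\|v_n\|_6^6\le\mathcal{S}^{-3}A(v_n)^3$. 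Combining these three observations yields
\[
 I(v_n)\ge\frac{1}{2}A(v_n)-\frac{1}{6\mathcal{S}^3}A(v_n)^3+o_n(1)
 =A(v_n)\Big(\frac{1}{2}-\frac{1}{6\mathcal{S}^3}A(v_n)^2\Big)+o_n(1).
\]

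The key step is to exploit the constraint $v_n\in B_{\rho_0}$, i.e.\ $A(v_n)<\rho_0$, which forces the bracketed factor to remain bounded away from zero: $\frac{1}{2}-\frac{1}{6\mathcal{S}^3}A(v_n)^2>\frac{1}{2}-\frac{1}{6\mathcal{S}^3}\rho_0^2=:\beta_0$. The only point requiring care is verifying $\beta_0>0$, and for this I would use the defining relation $h_{c_0}(\sqrt{\rho_0})=0$ from Lemma \ref{functionh}. Writing it out explicitly gives $\frac{1}{2}\rho_0-\frac{1}{6\mathcal{S}^3}\rho_0^3=\frac{\mu K_{GN}}{p}c_0^{(6-p)/4}\rho_0^{3(p-2)/4}>0$, and dividing through by $\rho_0>0$ produces exactly $\frac{1}{2}-\frac{1}{6\mathcal{S}^3}\rho_0^2>0$, so $\beta_0>0$.

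Combining the last two displays gives $I(v_n)\ge\beta_0 A(v_n)+o_n(1)=\beta_0\|\nabla v_n\|_2^2+o_n(1)$, which is the desired conclusion. The argument is essentially a single chain of inequalities; the only substantive inputs beyond the nonnegativity of $\phi_{v_n}$ and the Sobolev embedding are the quantitative smallness of $\rho_0$ encoded in $h_{c_0}(\sqrt{\rho_0})=0$, which pins down the positivity of $\beta_0$. No compactness or splitting argument is needed, since the hypothesis $\|v_n\|_p\to0$ eliminates the subcritical term outright.
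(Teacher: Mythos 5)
Your proposal is correct and follows essentially the same route as the paper: drop the nonnegative nonlocal term, absorb the vanishing $L^p$ term into $o_n(1)$, bound $\|v_n\|_6^6$ by $\mathcal{S}^{-3}\|\nabla v_n\|_2^6$, use $\|\nabla v_n\|_2^2<\rho_0$ to extract the factor $\beta_0=\frac{1}{2}-\frac{1}{6\mathcal{S}^3}\rho_0^2$, and deduce $\beta_0>0$ from $h_{c_0}(\sqrt{\rho_0})=0$. No gaps.
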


\begin{proof}
 By a direct calculation, we have
 \begin{align*}
 I(v_n)
&\ge\frac{1}{2}\|\nabla v_n\|_2^2-\frac{1}{6}\|v_n\|_6^6+o_n(1)\\
&\ge\frac{1}{2}\|\nabla v_n\|_2^2-\frac{1}{6\mathcal{S}^3}\|\nabla v_n\|_2^6+o_n(1)\\
&\ge\|\nabla v_n\|_2^2[\frac{1}{2}-\frac{1}{6\mathcal{S}^3}\rho_0^2]+o_n(1).
 \end{align*}
Since $h_{c_0}(\sqrt{\rho_0})=0$, we obtain
 \[
 \beta_0:=\frac{1}{2}-\frac{1}{6\mathcal{S}^3}\rho_0^2
=\frac{\mu K_{GN}}{p}c_0^{\frac{6-p}{4}}\rho_0^{\frac{3(p-2)}{4}-1}>0. \qedhere
 \]
\end{proof}

\begin{lemma}\label{nonvanish pro}
 For any $c\in(0,c_0)$, let $\{u_n\}\subset V(c)$ be a minimizing sequence for
$m(c)$ such that $u_n\rightharpoonup u$ in $H^1(\mathbb{R}^3)$ as $n\to\infty$.
Then $u\neq0$.
\end{lemma}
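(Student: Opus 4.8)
The plan is to argue by contradiction via the concentration-compactness principle, using the strict negativity $m(c)<0$ established in Lemma \ref{mcproperty}(i) as the source of the contradiction. First I note that $\{u_n\}\subset V(c)=S(c)\cap B_{\rho_0}$ is bounded in $H^1(\mathbb{R}^3)$, since $\|u_n\|_2^2=c$ and $\|\nabla u_n\|_2^2<\rho_0$; thus, up to a subsequence, $u_n\rightharpoonup u$ in $H^1(\mathbb{R}^3)$, and after a suitable translation (legitimate because $I$, the constraint $S(c)$ and the set $V(c)$ are all translation invariant) it suffices to rule out the vanishing alternative.

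Suppose toward a contradiction that vanishing occurs, i.e.
\[
\lim_{n\to\infty}\sup_{y\in\mathbb{R}^3}\int_{B(y,R)}|u_n|^2\,dx=0\quad\text{for every }R>0.
\]
Since $2<p<8/3<6$ and $\{u_n\}$ is bounded in $H^1(\mathbb{R}^3)$, Lions' vanishing lemma then gives $u_n\to0$ strongly in $L^p(\mathbb{R}^3)$, i.e.\ $\|u_n\|_p\to0$. Because moreover $\{u_n\}\subset B_{\rho_0}$, Lemma \ref{nonvanish pro1} applies and yields
\[
I(u_n)\ge\beta_0\|\nabla u_n\|_2^2+o_n(1)\ge o_n(1),
\]
with $\beta_0>0$. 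Passing to the limit gives $m(c)=\lim_{n\to\infty}I(u_n)\ge0$, which contradicts $m(c)<0$. Hence vanishing is impossible.

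Therefore there exist $\delta>0$, $R>0$ and points $\{y_n\}\subset\mathbb{R}^3$ with $\int_{B(y_n,R)}|u_n|^2\,dx\ge\delta$ for $n$ large. Setting $\tilde u_n:=u_n(\cdot+y_n)$, I obtain again a minimizing sequence in $V(c)$: the gradient and the $L^p$, $L^6$ terms are manifestly translation invariant, and for the nonlocal term I use Lemma \ref{nonlocal property}(i), namely $\phi_{u(\cdot+y)}=\phi_u(\cdot+y)$, so that $B(\tilde u_n)=B(u_n)$ and hence $I(\tilde u_n)=I(u_n)$. Writing $\tilde u_n\rightharpoonup \tilde u$ and using the compact embedding $H^1(B(0,R))\hookrightarrow\hookrightarrow L^2(B(0,R))$, I get $\int_{B(0,R)}|\tilde u|^2\,dx\ge\delta>0$, so the weak limit is nonzero; relabeling, this is the $u$ in the statement.

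The main obstacle is precisely the vanishing case, and the purpose of having proved Lemma \ref{nonvanish pro1} beforehand is to dispose of it: when all the $L^p$ mass is lost, one must still prevent the critical term $-\tfrac16\|u_n\|_6^6$ from dragging the energy below zero. This is exactly where the constraint $\{u_n\}\subset B_{\rho_0}$ and the defining relation $h_{c_0}(\sqrt{\rho_0})=0$ enter, forcing $\beta_0=\tfrac12-\tfrac{1}{6\mathcal{S}^3}\rho_0^2>0$ so that the Sobolev term is absorbed by the kinetic term. A secondary technical point requiring care is the translation invariance of the nonlocal functional $B$, which relies on Lemma \ref{nonlocal property}(i).
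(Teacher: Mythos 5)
Your proof is correct and follows essentially the same route as the paper: rule out vanishing via Lions' lemma combined with Lemma \ref{nonvanish pro1} and the strict negativity $m(c)<0$, then translate by the concentration points and use local compactness to get a nonzero weak limit. Your explicit justification of the translation invariance of the nonlocal term via Lemma \ref{nonlocal property}(i) is a welcome extra detail that the paper leaves implicit.
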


\begin{proof}
To show that there exist a $\beta_1>0$ and a sequence $\{y_n\}\subset\mathbb{R}^3$
such that for some $R>0$ it holds
 \begin{equation}\label{nonvanish proeq1}
 \int_{B(y_n,R)}|u_n|^2\,dx\ge\beta_1>0,
 \end{equation}
we argue by contradiction that \eqref{nonvanish proeq1} does not hold.
 According to \cite[Lemma I.1]{PLION1984}, for $2<p<6$ we have
 \[
 \|u_n\|_{L^p(\mathbb{R}^3)}\to0\quad \text{as }n\to\infty.
 \]
Then, from Lemma \ref{nonvanish pro1} it follows that $I(u_n)\ge o_n(1)$.
This contradicts the fact that $m(c)<0$.

From \eqref{nonvanish proeq1}, we know that there exist some $C>0$ and a sequence
$\{y_n\}\subset\mathbb{R}^3$ such that
 \[
 \int_{B(0,R)}|u_n(\cdot-y_n)|^2\,dx\ge C>0.
 \]
Because of the Rellich compactness theorem, we have
 \[
 u_n(x-y_n)\rightharpoonup u\neq0~~~\text{in}~~~H^1(\mathbb{R}^3),
 \]
which enables us to arrive at the desired result.
\end{proof}


Next we are going to verify all conditions described in
Proposition \ref{strong subadd} for presenting the proof of
Theorem \ref{thm: main result}.
In view of Lemmas \ref{split property of T}, \ref{mcproperty},
and \ref{nonvanish pro}, it suffices to prove \eqref{md condi3} and
\eqref{md condi4}.

\begin{lemma}\label{strong add con3}
 Assume that $c\in(0,c_0)$. Then the function $c\mapsto m(c)$ satisfies
\eqref{md condi3}.
\end{lemma}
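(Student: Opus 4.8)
The plan is to prove $\lim_{c\to0} m(c)/c = 0$ by squeezing the ratio between a trivial upper bound and an explicit lower bound. The upper direction is immediate: Lemma~\ref{mcproperty}(i) gives $m(c)<0$ for every $c\in(0,c_0)$, so $m(c)/c<0$ and hence $\limsup_{c\to0} m(c)/c \le 0$. The whole content of the lemma is therefore the matching lower estimate $\liminf_{c\to0} m(c)/c \ge 0$.

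For the lower bound I would start from the pointwise inequality \eqref{functional pro ge}, which gives $I(u)\ge h_c(\|\nabla u\|_2)$ for every $u\in S(c)$ and hence $m(c)\ge \inf_{0\le t<\sqrt{\rho_0}} h_c(t)$. On the admissible range $t^2<\rho_0$ one has $t^6\le \rho_0^2 t^2$, so the sextic term can be absorbed into the quadratic one; writing $\gamma:=\frac{3(p-2)}{2}\in(0,1)$ and using the constant $\beta_0=\frac12-\frac{\rho_0^2}{6\mathcal{S}^3}>0$ from Lemma~\ref{nonvanish pro1}, this yields
\[
h_c(t)\ge \beta_0 t^2-\frac{\mu K_{GN}}{p}c^{\frac{6-p}{4}}t^{\gamma}=:g_c(t).
\]

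The next step is an explicit one-variable minimization of $g_c$ over $[0,\infty)$. Since $\gamma<2$, $g_c(t)\to0^-$ as $t\to0^+$ and $g_c(t)\to+\infty$ as $t\to+\infty$, so $g_c$ has a unique critical point, which is its global minimizer,
\[
t_*=\big(\tfrac{\gamma\mu K_{GN}}{2p\beta_0}\big)^{1/(2-\gamma)}c^{\frac{6-p}{2(10-3p)}},
\]
where I have used $2-\gamma=\frac{10-3p}{2}$. Substituting back gives $g_c(t_*)=-C_\ast\,c^{\frac{6-p}{10-3p}}$ for an explicit constant $C_\ast>0$, the sign coming from $\gamma<2$. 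Because the exponent of $c$ in $t_*$ is positive, for all sufficiently small $c$ one has $t_*<\sqrt{\rho_0}$, so the minimizer lies in the admissible range and
\[
m(c)\ge g_c(t_*)=-C_\ast\,c^{\frac{6-p}{10-3p}}.
\]

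Dividing by $c$ gives $m(c)/c\ge -C_\ast\,c^{\frac{6-p}{10-3p}-1}$, and the decisive observation is that
\[
\frac{6-p}{10-3p}-1=\frac{2(p-2)}{10-3p}>0
\]
precisely because $p>2$ (and $p<10/3$). Hence $c^{\frac{6-p}{10-3p}-1}\to0$ as $c\to0^+$, giving $\liminf_{c\to0} m(c)/c\ge0$, which together with the upper bound establishes \eqref{md condi3}. The only point requiring genuine care — the main obstacle — is verifying that the minimizing gradient level $t_*$ stays below $\sqrt{\rho_0}$ so that the one-dimensional minimization really reflects the infimum over $V(c)$, and tracking the exponent of $c$ so that the sharp inequality $\frac{6-p}{10-3p}>1$ emerges; the remainder is a routine scalar estimate.
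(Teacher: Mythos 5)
Your proof is correct, and it takes a genuinely different route from the paper. The paper never estimates $m(c)$ directly: it drops the nonnegative nonlocal term to get $\tilde m(c)\le m(c)<0$ for the purely local functional $\tilde I$, invokes \cite[Theorem 1.2]{JJLV2022} to obtain minimizers $u_c$ of $\tilde m(c)$, sandwiches the Lagrange multiplier via $\lambda_c/2\le \tilde m(c)/c<0$, and then proves $\lambda_c\to0$ by a contradiction argument using the Euler--Lagrange equation and a uniform lower bound on $\|\nabla u_c\|_2$. You instead work entirely with the scalar lower bound \eqref{functional pro ge}: absorbing the sextic term into the quadratic one on $B_{\rho_0}$ (which is exactly where the constant $\beta_0>0$ of Lemma \ref{nonvanish pro1} comes from) and minimizing the resulting two-term function $g_c$ explicitly gives the quantitative estimate $m(c)\ge -C_*c^{\frac{6-p}{10-3p}}$, with exponent exceeding $1$ precisely because $p>2$. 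Your computations check out ($2-\gamma=\frac{10-3p}{2}$, the exponent of $c$ in $g_c(t_*)$ is $\frac{6-p}{10-3p}$, and $\frac{6-p}{10-3p}-1=\frac{2(p-2)}{10-3p}>0$). What your approach buys is self-containedness, elementarity, and an explicit rate of vanishing of $m(c)/c$; what the paper's approach buys is the additional information that the associated multipliers $\lambda_c$ tend to $0$, at the cost of importing a nontrivial existence theorem. One small simplification: you do not actually need to verify that $t_*<\sqrt{\rho_0}$, since the unconstrained global minimum $g_c(t_*)=\inf_{t\ge0}g_c(t)$ is automatically a lower bound for the infimum over the subinterval $[0,\sqrt{\rho_0})$; the check is true but superfluous.
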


\begin{proof}
 Since $m(c)<0$, we have
 \[
 \frac{\tilde{m}(c)}{c}\leq\frac{m(c)}{c}<0,
 \]
 where
 \[
\tilde{m}(c):=\inf_{u\in V(c)}\tilde{I}(u), \quad
\tilde{I}(u):=\frac{1}{2}\int_{\mathbb{R}^3}|\nabla u|^2\,dx
-\frac{\mu}{p}\int_{\mathbb{R}^3}|u|^p\,dx-\frac16\int_{\mathbb{R}^3}|u|^6\,dx.
 \]
Thus, it suffices to show that $\frac{\tilde{m}(c)}{c}\to0$ as $c\to0$.
Indeed, $\tilde{I}(u)$ is the functional associated with the following
Schr\"odinger equation with combined nonlinearities
 \[
 -\Delta u=\lambda u+\mu|u|^{p-2}u+|u|^4u~~\text{in}~~\mathbb{R}^3,
 \]
with the normalized condition $\|u\|_2^2=c$.
According to \cite[Theorem 1.2]{JJLV2022}, for any $c\in(0,c_0)$ there exists
$u_c\in V(c)$ such that $\tilde{m}(c)=\tilde{I}(u_c)<0$. Moreover, we know
that the sequence $\{u_c\}_{c\in(0,c_0)}$ is bounded in
$\mathcal{D}^{1,2}(\mathbb{R}^3)$ as $c\to0$ and $u_c$ satisfies the following
equation in the weak sense
 \begin{equation}\label{strong add con3 equation}
 -\Delta u_c=\lambda_c u_c+\mu|u_c|^{p-2}u_c+|u_c|^4u_c~~\text{in}~~\mathbb{R}^3,
 \end{equation}
from which we deduce that
 \begin{align*}
 \frac{\lambda_c}{2}
&=\frac{\int_{\mathbb{R}^3}|\nabla u_c|^2\,dx-\mu\int_{\mathbb{R}^3}|u_c|^p\,dx
 -\int_{\mathbb{R}^3}|u_c|^6\,dx}{2\int_{\mathbb{R}^3}|u_c|^2\,dx}\\
&\leq\frac{\frac{1}{2}\int_{\mathbb{R}^3}|\nabla u_c|^2\,dx
 -\frac{\mu}{p}\int_{\mathbb{R}^3}|u_c|^p\,dx
 -\frac{1}{6}\int_{\mathbb{R}^3}|u_c|^6\,dx}{\int_{\mathbb{R}^3}|u_c|^2\,dx}\\
&=\frac{\tilde{I}(u_c)}{c}=\frac{\tilde{m}(c)}{c}<0.
 \end{align*}

To show that $\lim_{c\to0}\lambda_c=0$, we argue by contradiction: assume that
there exists a sequence $c_n\to0$ such that $\lambda_{c_n}<-C$ for some $C\in(0,1)$.
Since the minimizers $u_n:=u_{c_n}$ satisfies \eqref{strong add con3 equation},
there exist some $C_1, C_2>0$ such that
 \begin{align*}
 C\|u_n\|_{H^1(\mathbb{R}^3)}^2
&\leq\int_{\mathbb{R}^3}|\nabla u_n|^2\,dx+C\int_{\mathbb{R}^3}|u_n|^2\,dx\\
&<\int_{\mathbb{R}^3}|\nabla u_n|^2\,dx-\lambda_{c_n}\int_{\mathbb{R}^3}|u_n|^2\,dx\\
&=\mu\int_{\mathbb{R}^3}|u_n|^p\,dx+\int_{\mathbb{R}^3}|u_n|^6\,dx\\
&\leq C_1\|u_n\|_{H^1(\mathbb{R}^3)}^{p}+C_2\|u_n\|_{H^1(\mathbb{R}^3)}^6.
 \end{align*}
This indicates that there exists $C_3>0$ such that $\|\nabla u_n\|_2^2>C_3>0$
because $p>2$. Hence, from Remark \ref{remark positive} it follows that
 \begin{align*}
 0&>\tilde{I}(u_n) \\
&\ge\frac{1}{2}\|\nabla u_n\|_2^2-\frac{\mu K_{GN}}{p}c_n^{\frac{6-p}{4}}\|\nabla u_n\|_2^{\frac{3(p-2)}{2}}-\frac{1}{6\mathcal{S}^3}\|\nabla u_n\|_2^6\\
&=\|\nabla u_n\|_2^2\Big(\frac{1}{2}-\frac{\mu K_{GN}}{p}c_n^{\frac{6-p}{4}}
 \|\nabla u_n\|_2^{\frac{3(p-2)}{2}-2}
 -\frac{1}{6\mathcal{S}^3}\|\nabla u_n\|_2^4\Big)\\
&\ge C_3\Big(\frac{1}{2}-\frac{\mu K_{GN}}{p}c_n^{\frac{6-p}{4}}
 \rho_0^{\frac{3(p-2)}{4}-1}-\frac{1}{6\mathcal{S}^3}\rho_0^2\Big)
>0,
\end{align*}
which yields a contradiction.
\end{proof}

\begin{lemma}\label{strong subadd condi}
 Let $c\in(0,c_0)$. Then the following strict subadditivity inequality holds,
 \[
 m(c)<m(c_1)+m(c_2),
 \]
 where $c=c_1+c_2$ and $0<c_1,c_2<c$.
\end{lemma}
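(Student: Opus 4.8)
The strategy is to verify the hypotheses of Proposition \ref{strong subadd} and then read off condition \eqref{eMD}, from which the strict subadditivity is immediate. Lemma \ref{split property of T} supplies the splitting properties of $T$, Lemma \ref{nonvanish pro} guarantees that every minimizing sequence has a nonzero weak limit up to translation, and Lemma \ref{mcproperty} provides the weak subadditivity \eqref{weak add} together with \eqref{md condi1} (the lower bound $m(c)>-\infty$ coming from \eqref{functional pro ge}) and the continuity \eqref{md condi2}; finally Lemma \ref{strong add con3} is precisely \eqref{md condi3}. Hence the only remaining hypothesis to check is \eqref{md condi4}. Once it holds, Proposition \ref{strong subadd} gives that $c\mapsto m(c)/c$ is strictly decreasing, so for $0<c_1<c$ and $c_2:=c-c_1$ we obtain $\tfrac{c_1}{c}m(c)<m(c_1)$ and $\tfrac{c_2}{c}m(c)<m(c_2)$, and adding these yields $m(c)<m(c_1)+m(c_2)$, as claimed.

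To verify \eqref{md condi4}, fix $u\in M(c)$; then $u$ minimizes $I$ on $V(\tilde c)$ for some $\tilde c\in(0,c]$ with $I(u)=m(\tilde c)<0$. By Lemma \ref{mcproperty}(i), $m(\tilde c)<0<\inf_{\partial V(\tilde c)}I$, so $u$ is an interior minimizer and therefore a constrained critical point: $u$ solves \eqref{sbp} for some $\lambda\in\mathbb{R}$, and testing \eqref{sbp} with $u$ yields the Nehari identity \eqref{neharid}, $A(u)+B(u)=\lambda\tilde c+\mu C(u)+D(u)$. I choose the mass-scaling path $g_u(\theta):=\sqrt\theta\,u$, so that $\Theta_{g_u}(\theta)=\theta$ with $\Theta'_{g_u}(1)=1\neq0$, whence $g_u\in G_u$; moreover $I(\sqrt\theta u)=\tfrac{\theta}{2}A(u)+\tfrac{\theta^2}{4}B(u)-\tfrac{\mu\theta^{p/2}}{p}C(u)-\tfrac{\theta^3}{6}D(u)$ is smooth in $\theta$, so $g_u$ is admissible. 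Differentiating $h_{g_u}(\theta)=I(\sqrt\theta u)-\theta\,m(\tilde c)$ and inserting \eqref{neharid},
\[
\frac{d}{d\theta}h_{g_u}(\theta)\Big|_{\theta=1}=\tfrac12\big(A(u)+B(u)-\mu C(u)-D(u)\big)-m(\tilde c)=\tfrac{\lambda\tilde c}{2}-m(\tilde c)=:E(u),
\]
and a direct simplification gives $E(u)=\tfrac14 B(u)-\tfrac{\mu(p-2)}{2p}C(u)-\tfrac13 D(u)$. Thus \eqref{md condi4} is equivalent to the assertion $E(u)\neq0$ for every such minimizer.

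The statement I aim for is the sharper $E(u)<0$. Since $u$ solves \eqref{sbp}, Lemma \ref{npidentity} gives $Q(u)=0$; writing $W(u):=\tfrac{1}{16\pi}\iint_{\mathbb{R}^3\times\mathbb{R}^3}e^{-|x-y|}u^2(x)u^2(y)\,dx\,dy\ge0$, this reads $\tfrac{3\mu(p-2)}{2p}C(u)+D(u)=A(u)+\tfrac14 B(u)-W(u)$. Substituting this into $E(u)$ eliminates $C(u)$ and $D(u)$ and produces the exact identity
\[
E(u)=\tfrac13\Big(\tfrac12 B(u)+W(u)-A(u)\Big),
\]
so $E(u)<0$ is equivalent to the kinetic term dominating the two nonlocal terms, $A(u)>\tfrac12 B(u)+W(u)$. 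Since $(1+r)e^{-r}\le1$ for $r\ge0$ one has the pointwise bound $e^{-r}\le\tfrac{1-e^{-r}}{r}$, whence $W(u)\le\tfrac14 B(u)$ and therefore $\tfrac12 B(u)+W(u)\le\tfrac34 B(u)$; combined with Lemma \ref{CDestimate}(i), $B(u)\le K_H\|\nabla u\|_2\,\tilde c^{3/2}$, it suffices to establish the gradient lower bound $\|\nabla u\|_2^2>\tfrac34 K_H\|\nabla u\|_2\,\tilde c^{3/2}$, that is, $\|\nabla u\|_2^2\gtrsim\tilde c^{3}$.

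This gradient lower bound is the crux of the argument. I would obtain it by pinching $m(\tilde c)$ between two estimates. On the one hand, \eqref{functional pro ge} gives $m(\tilde c)=I(u)\ge h_{\tilde c}(\|\nabla u\|_2)$, and near $t=0$ one has $h_{\tilde c}(t)\ge-\tfrac{\mu K_{GN}}{p}\tilde c^{(6-p)/4}t^{3(p-2)/2}-\tfrac{1}{6\mathcal S^3}t^6$. On the other hand, testing $I$ on an $L^2$-rescaled fixed profile $U^t$ as in the proof of Lemma \ref{mcproperty}(i)—where, as $t\to0$, the contributions of $B$ and $D$ are of strictly higher order than the subcritical term—yields an upper bound $m(\tilde c)\le-\kappa\,\tilde c^{(6-p)/(10-3p)}$ for some $\kappa>0$ and all small $\tilde c$. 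Comparing the two forces $\|\nabla u\|_2^2\ge C_0\,\tilde c^{(6-p)/(10-3p)}$. Because $2<p<\tfrac83<3$ the exponent satisfies $(6-p)/(10-3p)<3$, so this power of $\tilde c$ dominates $\tilde c^{3}$ once $c_0$ is taken small; this is exactly where the restriction $p<\tfrac83$ (equivalently $\tfrac{3(p-2)}{2}<1$) enters decisively. Consequently $A(u)>\tfrac12 B(u)+W(u)$, hence $E(u)<0$, so \eqref{md condi4} holds and the conclusion follows from Proposition \ref{strong subadd} together with the subadditivity deduction of the first paragraph.
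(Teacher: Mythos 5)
Your proposal is correct, and it reaches condition \eqref{md condi4} by a genuinely different route than the paper. Both arguments ultimately hinge on the same quantity: your $E(u)=\tfrac14B(u)-\tfrac{\mu(p-2)}{2p}C(u)-\tfrac13D(u)$ is exactly $h_{g_u}'(1)$ for the mass-scaling path $\sqrt\theta\,u$, which is the $\beta=0$ member of the family $u_\theta=\theta^{(1+3\beta)/2}u(\theta^\beta x)$ that the paper uses. The paper keeps the whole $\beta$-family, argues by contradiction along a sequence $c_n\to0$, extracts the two identities \eqref{strong add condition eq3}--\eqref{strong add condition eq4}, and kills the possibility $E(u_n)=0$ by a Gagliardo--Nirenberg/Hardy--Littlewood--Sobolev interpolation split into three cases according to the position of $p$ relative to $12/5$. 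You instead fix a single path and prove the \emph{strict sign} $E(u)<0$: combining $Q(u)=0$ with the pointwise bound $(1+r)e^{-r}\le1$ (so $W(u)\le\tfrac14B(u)$) reduces everything to the quantitative gradient lower bound $\|\nabla u\|_2^2\gtrsim\tilde c^{(6-p)/(10-3p)}\gg\tilde c^3$, obtained by pinching $m(\tilde c)$ between the coercivity estimate \eqref{functional pro ge} and a test-function upper bound. I checked your algebra ($E(u)=\tfrac13(\tfrac12B+W-A)$, the exponent $(6-p)/(10-3p)<3$, and the order comparison of the $B$- and $D$-contributions in the test function, which needs only $p<3$): it is all consistent. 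What your approach buys is a cleaner logical structure (you handle every $u\in M(c)$ with $\|u\|_2^2=\tilde c\le c$ directly, rather than only sequences with vanishing mass) and no case analysis in $p$; what it costs is that the two pinching ingredients --- the uniform upper bound $m(\tilde c)\le-\kappa\,\tilde c^{(6-p)/(10-3p)}$ and the deduction of the gradient lower bound from it (one must split according to which of the two negative terms in $h_{\tilde c}$ absorbs $\kappa\tilde c^{(6-p)/(10-3p)}$) --- are only sketched and would need to be written out, and you may have to shrink $c_0$, which the statement of Theorem \ref{thm: main result} permits. One small attribution slip: the inequality $(6-p)/(10-3p)<3$ holds for all $p<3$, so the restriction $p<8/3$ is not what makes this step work; that restriction is needed earlier, for the fiber-map geometry in Lemma \ref{mcproperty}.
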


\begin{proof}
 Note that from Proposition \ref{strong subadd}, we just need to show the
condition \eqref{md condi4} holds. For $u\in M(c)$, without loss of generality,
we suppose that there exists some $\tilde{c}\in(0,c]$ such that
 $\|u\|_2^2=\tilde{c}$ and $I(u)=m(\tilde{c})$.
 Since $u$ is a minimizer of $I(u)$ on $V(\tilde{c})$, we deduce from
Lemma \ref{npidentity} that
 \begin{equation}\label{strong add condition eq1}
A(u)+\frac{1}{4}B(u)-\frac{1}{16\pi}\int_{\mathbb{R}^3}
\int_{\mathbb{R}^3}e^{-|x-y|}u^2(x)u^2(y)\,dx\,dy-\frac{3\mu(p-2)}{2p}C(u)-D(u)=0.
 \end{equation}

For $u\neq0$ we compute $h_{g_u}(\theta)$ by considering the family of scaling
paths of $u$ parameterized with $\beta\in\mathbb{R}$ given by
$u_{\theta}(x):=\theta^{\frac{1+3\beta}{2}}u(\theta^\beta x)$.
By a straightforward computation, we have
\begin{gather*}
A(u_{\theta})=\theta^{1+2\beta}A(u),\quad
B(u_{\theta})=\theta^{2+\beta}H(u)-\frac{\theta^{2+\beta}}{4\pi}
 \int_{\mathbb{R}^3}\int_{\mathbb{R}^3}
 \frac{e^{-\frac{|x-y|}{\theta^\beta}}}{|x-y|}u^2(x)u^2(y)\,dx\,dy,\\
C(u_{\theta})=\theta^{\frac{(1+3\beta)p}{2}-3\beta}C(u),\quad
D(u_{\theta})=\theta^{3(1+2\beta)}D(u),~~\|u_{\theta}\|_2^2=\theta\|u\|_2^2,
 \end{gather*}
where $H(u)=\frac{1}{4\pi}\int_{\mathbb{R}^3}\int_{\mathbb{R}^3}
\frac{u^2(x)u^2(y)}{|x-y|}\,dx\,dy$.

Let $h_{g_u}(\theta)=f(\theta,u):=I(u_{\theta})-\theta I(u)$. Then
 \begin{equation}\label{strict add fun1}
\begin{aligned}
 h_{g_u}(\theta)
&=f(\theta,u)\\
&=\frac{1}{2}(\theta^{1+2\beta}-\theta)A(u)
 +\frac{1}{4}\Big[\theta^{2+\beta}H(u) \\
&\quad -\frac{\theta^{2+\beta}}{4\pi}
 \int_{\mathbb{R}^3}\int_{\mathbb{R}^3}
 \frac{e^{-\frac{|x-y|}{\theta^\beta}}}{|x-y|}u^2(x)u^2(y)\,dx\,dy
 -\theta B(u)\Big]\\
&\quad -\frac{\mu}{p}(\theta^{\frac{(1+3\beta)p}{2}-3\beta}-\theta)C(u)
 -\frac16(\theta^{3(1+2\beta)}-\theta)D(u).
 \end{aligned}
\end{equation}
Moreover, we can deduce that
 \begin{align*}
 f'_{\theta}(\theta,u)
&=\frac{1}{2}\left((1+2\beta)\theta^{2\beta}-1\right)A(u)
 +\frac14\Big[(2+\beta)\theta^{1+\beta}H(u)\\
&\quad -\frac{(2+\beta)\theta^{1+\beta}}{4\pi}\int_{\mathbb{R}^3}\int_{\mathbb{R}^3}
 \frac{e^{-\frac{|x-y|}{\theta^\beta}}}{|x-y|}u^2(x)u^2(y)\,dx\,dy\\
&\quad -\frac{\theta^{2+\beta}}{4\pi}\frac{\beta}{\theta^{\beta+1}}
 \int_{\mathbb{R}^3}\int_{\mathbb{R}^3}e^{-\frac{|x-y|}{\theta^\beta}}u^2(x)u^2(y)
 \,dx\,dy-B(u)\Big]\\
&\quad -\frac{\mu}{p}\Big((\frac{(1+3\beta)p}{2}-3\beta)\theta^{\frac{(1+3\beta)p}{2}
 -3\beta-1}-1\Big)C(u)\\
&\quad -\frac16\left((3(1+2\beta))\theta^{3(1+2\beta)-1}-1\right)D(u),
 \end{align*}
which leads to
 \begin{equation}\label{strict add fun2}
\begin{aligned}
&f'_{\theta}(1,u) \\
&=\beta A(u)+\frac{1}{4}\Big[-\frac{\beta}{4\pi}\int_{\mathbb{R}^3}
 \int_{\mathbb{R}^3}e^{-|x-y|}u^2(x)u^2(y)\,dx\,dy+(1+\beta)B(u)\Big]\\
&\quad -\frac{\mu}{p}(\frac{(1+3\beta)p}{2}-3\beta-1)C(u)-\frac16(3(1+2\beta)-1)D(u).
 \end{aligned}
\end{equation}
Now it remains to show that the admissible scaling path satisfies
$h'_{g_u}(1)\neq0$. Again, we process by way of contradiction: assume that there
exists a sequence $\{u_n\}\subset M(c)$ with $\|u_n\|_2^2=c_n\leq c$ and $c_n\to0$
such that for all $\beta\in\mathbb{R}$, we have $f'_{\theta}(1,u_n)=0$. That is,
 \begin{equation}\label{strong add condition eq2}
\begin{aligned}
&\beta A(u_n)+\frac{1}{4}\Big[-\frac{\beta}{4\pi}\int_{\mathbb{R}^3}
 \int_{\mathbb{R}^3}e^{-|x-y|}u_n^2(x)u_n^2(y)\,dx\,dy+(1+\beta)B(u_n)\Big]\\
&-\frac{\mu}{p}(\frac{(1+3\beta)p}{2}-3\beta-1)C(u_n)-\frac16(3(1+2\beta)-1)D(u_n)
=0.
 \end{aligned}
\end{equation}
Combining \eqref{strong add condition eq1} and \eqref{strong add condition eq2}
yields
 \begin{gather}\label{strong add condition eq3}
 \frac{1}{4}B(u_n)=\frac{\mu(p-2)}{2p}C(u_n)+\frac{1}{3}D(u_n), \\
\label{strong add condition eq4}
 B(u_n)=2A(u_n)-\frac{1}{8\pi}\int_{\mathbb{R}^3}\int_{\mathbb{R}^3}
e^{-|x-y|}u_n^2(x)u_n^2(y)\,dx\,dy.
 \end{gather}
 Moreover, from the continuity of $m(c)$ and the Gagliardo-Nirenberg inequality
\eqref{gn}, we have
 \begin{equation}\label{strong add condition eq5}
 \begin{gathered}
  I(u_n)=m(c_n)\to 0,\\
  A(u_n), B(u_n), C(u_n), D(u_n)\to 0.
 \end{gathered}
 \end{equation}
We need to consider three cases.
\smallskip

\noindent\textbf{Case 1:} $2<p<12/5$.
From the Hardy-Littlewood-Sobolev inequality, the interpolation inequality,
the Sobolev embedding theorem and \eqref{strong add condition eq3}, it follows that
 \begin{align*}
 B(u_n)
&=\frac{1}{4\pi}\int_{\mathbb{R}^3}\int_{\mathbb{R}^3}\frac{1-e^{-|x-y|}}{|x-y|}u_n^2(x)u_n^2(y)\,dx\,dy\\
 &\leq\frac{1}{4\pi}\int_{\mathbb{R}^3}\int_{\mathbb{R}^3}\frac{u_n^2(x)u_n^2(y)}{|x-y|}\,dx\,dy\\
 &\leq C\|u_n\|_{12/5}^4\\
 &\leq C\|u_n\|_{p}^{\frac{6p}{6-p}}\|u_n\|_{6}^{\frac{12-5p}{3p}}\\
 &=CC(u_n)^{\frac{6}{6-p}}A(u_n)^{\frac{12-5p}{6p}}\\
 &\leq C_1B(u_n)^{\frac{6}{6-p}}A(u_n)^{\frac{12-5p}{6p}}.
\end{align*}
This leads to
 \[
 1\leq C_1B(u_n)^{\frac{p}{6-p}}A(u_n)^{\frac{12-5p}{6p}},
 \]
which yields a contradiction with \eqref{strong add condition eq5}.
\smallskip

\noindent\textbf{Case 2:} $p=\frac{12}{5}$. Due to \eqref{strong add condition eq3},
we obtain
 \[
 \|u_n\|_{12/5}^{12/5}\leq\frac{3}{\mu}B(u_n)\leq C_2\|u_n\|_{12/5}^4,
 \]
 which is impossible because of the fact $\|u_n\|_{12/5}\to0$.
\smallskip

\noindent\textbf{Case 3:} $12/5<p<8/3$.
From \eqref{strong add condition eq3} it follows that
 \[
 \|u_n\|_p^p\leq\frac{p}{2\mu(p-2)}B(u_n)\leq C_3\|u_n\|_{12/5}^4\leq C_3\|u_n\|_2^{\frac{2(5p-12)}{3(p-2)}}\|u_n\|_p^{\frac{2p}{3(p-2)}}.
 \]
This leads to
 \[
 1\leq C_3 c_n^{{\frac{5p-12}{3(p-2)}}}\|u_n\|_p^{\frac{p(8-3p)}{3(p-2)}},
 \]
 which yields another contradiction with \eqref{strong add condition eq5}.

All the hypotheses of Proposition \ref{strong subadd} have been
verified and thus we have arrived at the desired result.
\end{proof}

\begin{proof}[Proof of Theorem \ref{thm: main result}]
 For any $c\in(0,c_0)$, we assume that $\{u_n\}\subset V(c)$ satisfies
$I(u_n)\to m(c)$. By Lemma \ref{nonvanish pro}, there exists a sequence
$\{y_n\}\subset\mathbb{R}^3$ such that
 \[
 u_n(x-y_n)\rightharpoonup u\neq0~~\text{in}~~H^1(\mathbb{R}^3).
 \]
We start by showing that $w_n(x):=u_n(x-y_n)-u(x)\to0$ in $H^1(\mathbb{R}^3)$.
Clearly, we can see that
 \begin{gather*}
\|u_n\|_2^2=\|w_n\|_2^2+\|u\|_2^2+o_n(1),\\
\|\nabla u_n\|_2^2=\|\nabla w_n\|_2^2+\|\nabla u\|_2^2+o_n(1),\\
I(u_n)=I(w_n)+I(u)+o_n(1).
 \end{gather*}
The last equality holds because of the translational invariance.
Then we claim that $\|w_n\|_2^2\to0$. Let $\|u\|_2^2=c_1>0$. It suffices to
show that $c_1=c$. We assume by contradiction that $c_1<c$. Since we have,
for $n$ large enough, $\|w_n\|_2^2\leq c$ and
$\|\nabla w_n\|_2^2\leq\|\nabla u_n\|_2^2<\rho_0$.
Then $w_n\in V(\|w_n\|_2^2)$ and $I(w_n)\ge m(\|w_n\|_2^2)$, which implies that
 \[
 m(c)=I(w_n)+I(u)+o_n(1)\ge m(\|w_n\|_2^2)+I(u)+o_n(1).
 \]

By Lemma \ref{mcproperty} (ii), we have
 \[
 m(c)\ge m(c-c_1)+I(u).
 \]
Moreover, we see that $u\in V(c_1)$, then $I(u)\ge m(c_1)$,
and from Lemma \ref{strong subadd condi} it follows that
 \[
 m(c)\ge m(c-c_1)+m(c_1)>m(c),
 \]
which is impossible. Hence the claim follows and $\|u\|_2^2=c$.

Now we show that $\|\nabla w_n\|_2^2\to0$. Since $u\neq0$, we have
$\|\nabla w_n\|_2^2\leq\|\nabla u_n\|_2^2<\rho_0$ for $n$ large enough.
Thus $\{w_n\}\subset B_{\rho_0}$ and $\{w_n\}$ is bounded in $H^1(\mathbb{R}^3)$.
By Lemma \ref{CDestimate} (ii), recalling $\|w_n\|_2^2\to0$, we obtain
$\|w_n\|_p^p\to0$. Then, from Lemma \ref{nonvanish pro1} we can deduce that
 \begin{equation}\label{proof of main re eq1}
 I(w_n)\ge\beta_0\|\nabla w_n\|_2^2+o_n(1)~~\text{for}~~\beta_0>0.
 \end{equation}
Since $u\in V(c)$, we obtain $I(u)\ge m(c)$. Then
 \[
 I(u_n)=I(u)+I(w_n)+o_n(1)\rightarrow m(c),
 \]
 which implies
 \[
 I(w_n)\leq o_n(1).
 \]
 Taking into account \eqref{proof of main re eq1}, we can see that
$\|\nabla w_n\|_2^2\to0$. Therefore, $u_n\to u$ holds in $H^1(\mathbb{R}^3)$.
Moreover, $u$ is a minimizer for $I$ on $V(c)$.

 By  Lemma \ref{npidentity}, we know that all the minimizers of the functional
$I$ restricted on $S(c)$ lie in $\Lambda(c)$. We define
 \[
 \bar{m}(c):=\inf_{u\in\Lambda(c)}I(u).
 \]
 By Lemma \ref{npidentity coer}, $I$ restricted to $\Lambda(c)$ is bounded
from below, which means that $\bar{m}(c)$ is well-defined.
By an analogous argument, we can see that Proposition \ref{strong subadd}
also holds for $\bar{m}(c)$. This indicates that any minimizing sequence
$\{\bar{u}_n\}$ on $\Lambda(c)$ is relatively compact, i.e.,
$\bar{u}_n\to \bar{u}$ in $H^1(\mathbb{R}^3)$. It is easily seen that
$\{\bar{u}_n\}$ is a bounded Palais-Smale sequence for $I$ on $S(c)$, and thus
$\bar{u}$ is a ground state solution of \eqref{SBP} on $S(c)$.
\end{proof}

\subsection*{Acknowledgments}
This work is supported by National Natural Science Foundation of China No.\
11971095.

\end{document}